\DeclareSymbolFontAlphabet{\mathbb}{AMSb}
\DeclareSymbolFontAlphabet{\mathbbl}{bbold}
\setlist[enumerate]{itemsep=2pt,parsep=2pt,before={\parskip=2pt}}
\newcommand{\cosimp}[3]{\xymatrix@1{#1 \ar@<.4ex>[r] \ar@<-.4ex>[r] & {\ }#2 \ar@<0.8ex>[r] \ar[r] \ar@<-.8ex>[r] & {\ } #3 \ar@<1.2ex>[r] \ar@<.4ex>[r] \ar@<-.4ex>[r] \ar@<-1.2ex>[r] & \cdots }}
\newcommand{\adjunction}[4]{\xymatrix@1{#1{\ } \ar@<0.3ex>[r]^{ {\scriptstyle #2}} & {\ } #3 \ar@<0.3ex>[l]^{ {\scriptstyle #4}}}}
\numberwithin{equation}{section}
\DeclareMathOperator{\crys}{crys}
\DeclareMathOperator{\Hom}{Hom}
\DeclareMathOperator{\Spf}{Spf}
\DeclareMathOperator{\ad}{ad}
\DeclareMathOperator{\cont}{cont}
\DeclareMathOperator{\GL}{GL}
\DeclareMathOperator{\GSp}{GSp}
\DeclareMathOperator{\rig}{rig}
\DeclareMathOperator{\reg}{reg}
\DeclareMathOperator{\Aut}{Aut}
\DeclareMathOperator{\rec}{rec}
\DeclareMathOperator{\Gal}{Gal}
\DeclareMathOperator{\der}{der}
\DeclareMathOperator{\ab}{ab}
\newtheorem{theorem}{Theorem}[section]
\newtheorem*{theorem*}{Theorem}
\newtheorem*{definition*}{Definition}
\newtheorem{lemma}[theorem]{Lemma}
\theoremstyle{definition}
\newtheorem{definition}[theorem]{Definition}
\newtheorem{remark}[theorem]{Remark}
\crefname{assumption}{assumption}{assumptions}
\crefname{construction}{construction}{constructions}
\title[Crystalline lifts with fixed determinant]{Crystalline lifts of semisimple $G$-valued Galois representations with fixed determinant}
\author{Kensuke Aoki}
\address{Department of Mathematics, Faculty of Science, Kyoto University}
\email{aoki.kensuke.88s@st.kyoto-u.ac.jp}
\begin{document}

\begin{abstract} 
%We show the existence of a crystalline lift with a fixed determinant representation of split reductive $G$-valued representations $\overline{\rho} \colon \Gal_K \to G(\mathbb{F}_p)$ of the Galois group $\Gal_K$ of a finite extension $K/\mathbb{Q}_p$. 
For a finite extension $K/\mathbb{Q}_p$ and a split reductive group $G$ over $\mathcal{O}_K$, let $\overline{\rho} \colon \Gal_K \to G(\overline{\mathbb{F}}_p)$ be a continuous quasi-semisimple mod $p$ $G$-valued representation of the absolute Galois group $\Gal_K$. 
Let $\overline{\rho}^{\ab}$ be the abelianization of $\overline{\rho}$ and fix a crystalline lift $\psi$ of $\overline{\rho}^{\ab}$. 
We show the existence of a crystalline lift $\rho$ of $\overline{\rho}$ with regular Hodge-Tate weights such that the abelianization of $\rho$ coincides with $\psi$. 
We also show analogous results in the case that $G$ is a quasi-split tame group and $\overline{\rho} \colon \Gal_K \to {^L}G(\overline{\mathbb{F}}_p)$ is a semisimple mod $p$ $L$-parameter. 
These theorems are generalizations of those of Lin and B\"ockle-Iyengar-Pa\v{s}k\={u}nas. 
\end{abstract}

\maketitle

\tableofcontents

\section{Introduction}

Let $K/\mathbb{Q}_p$ be a finite extension with its ring of integers $\mathcal{O}_K$, $\Gal_K \coloneqq \Gal(\overline{K}/K)$ be the absolute Galois group of $K$, $L/K$ be a finite extension with its ring of integers $\mathcal{O}_L$. Let $\mathfrak{m}_K$, $\mathfrak{m}_L$ be the maximal ideals of $\mathcal{O}_K$, $\mathcal{O}_L$, and $k$, $k_L$ be the residue fields of $\mathcal{O}_K$, $\mathcal{O}_L$, respectively. 
Let $\mathbb{C}_p$ be the $p$-adic completion of $\overline{K}$. 

%Recall that a continuous $G$-valued representation $\rho \colon \Gal_K \to G(L)$ is said to be \emph{crystalline} if for some (equivalently, any) finite dimensional faithful algebraic $L$-representation $i \colon G \otimes_{\mathcal{O}_K} L \to \GL(V)$, the composite $i \circ \rho$ is crystalline (cf.\ \cite[Definition 2.3.1]{Bal12}). 

For a reductive group $G$ over $\mathcal{O}_L$, a continuous $G$-valued representation $\rho \colon \Gal_K \to G(L)$ is \emph{Hodge-Tate} (resp.\ \emph{crystalline}, \emph{potentially crystalline}) if for any algebraic $L$-representation $\tau \colon G \to \GL (V)$, 
%and $\sigma \in \Sigma_L \coloneqq \Hom_{\mathbb{Q}_p} (L, \mathbb{C}_p)$, 
the composite $\tau \circ \rho$ is a Hodge-Tate (resp.\ crystalline, potentially crystalline) representation.  
For a Hodge-Tate $G$-valued representation $\rho \colon \Gal_K \to G(L)$, we say that $\rho$ has \emph{regular} Hodge-Tate weights if for each $\sigma \in \Hom_{\mathbb{Q}_p} (L, \mathbb{C}_p)$, the associated cocharacter $\mathcal{H}\mathcal{T}(\rho)^{\sigma}$ of $G_{\mathbb{C}_p}$ is regular, that is, not killed by any root of $G_{\mathbb{C}_p}$ (see Definition \ref{G_Hodge_Tate}). 
Let $G^{\ab}$ be the abelianization of $G$. The composite of $\rho$ with $G(L) \to G^{\ab} (L)$ is written as $\rho^{\ab}$. We call it the \emph{abelianization} of $\rho$. 

A continuous representation $\rho \colon \Gal_K \to G(\mathcal{O}_L)$ is said to be \emph{Hodge-Tate} (resp.\ \emph{crystalline}, \emph{potentially crystalline}) if the composite $\Gal_K \xrightarrow{\rho} G(\mathcal{O}_L) \hookrightarrow G(L)$ is Hodge-Tate (resp.\ crystalline, potentially crystalline). 

From here, let $G$ be a connected split reductive group over $\mathcal{O}_L$ and let $T$ be a maximal torus of $G$. 
We say $\overline{\rho} \colon \Gal_K \to G(\overline{\mathbb{F}}_p)$ is \emph{quasi-semisimple} if there exists a maximal torus $T'$ of $G_{\overline{\mathbb{F}}_p} \coloneqq G \otimes_{\mathcal{O}_K} \overline{\mathbb{F}}_p$ such that $\overline{\rho} (I_K) \subset T'(\overline{\mathbb{F}}_p)$ and $\overline{\rho} (\Gal_K) \subset N_{G_{\overline{\mathbb{F}}_p}} (T')(\overline{\mathbb{F}}_p)$, where $I_K \subset \Gal_K$ denotes the inertia subgroup and $N_{G_{\overline{\mathbb{F}}_p}} (T')$ denotes the normalizer of $T'$ in $G_{\overline{\mathbb{F}}_p}$. 

The first main theorem of this paper is as follows: 

\begin{theorem}[{Theorem \ref{qss_cryslift_fixeddet}}]
\label{intro_qss_cryslift_fixeddet}

%Assume that $T$ is a split torus and the composite $T \hookrightarrow G \twoheadrightarrow G^{\ab}$ has a right inverse, that is, there exists a homomorphism $G^{\ab} \to T$ such that the composite $G^{\ab} \to T \to G^{\ab}$ is the identity.  
Let $\overline{\rho} \colon \Gal_K \to G(k_L)$ be a quasi-semisimple $G$-valued representation such that $\overline{\rho} (I_K) \subset T(k_L)$ and $\overline{\rho} (\Gal_K) \subset N_G (T)(k_L)$.
Let $\overline{\rho}^{\ab} \colon \Gal_K \to G^{\ab} (k_L)$ be the composite of $\overline{\rho}$ and the natural map $G(k_L) \to G^{\ab} (k_L)$. Let $\psi \colon \Gal_K \to G^{\ab} (\mathcal{O}_L)$ be a crystalline lift of $\overline{\rho}^{\ab}$. Then there exist a finite extension $L'/L$ and a crystalline lift $\rho \colon \Gal_K \to G(\mathcal{O}_{L'})$ of $\overline{\rho}$ with regular Hodge-Tate weights such that ${\rho}^{\ab} = \psi$. 
%of the ring of integers $\mathcal{O}_{L'}$ of $L'$. 
%There also exists a crystalline lift ${\rho}^{\reg}$ with regular Hodge-Tate weights such that ${\rho}^{\reg, \det} = \chi^{q^M} \psi^f$ for a crystalline representation $\chi \colon \Gal_K \to G^{\ab} (\mathcal{O}_L)$ and a sufficiently large positive integer $M$. 
Moreover, if we do not impose a crystalline lift $\rho$ has regular Hodge-Tate weights, $L'/L$ can be taken to be a finite unramified extension. 
\end{theorem}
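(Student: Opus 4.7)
The strategy is to reduce to lifting a character into the maximal torus $T$ over a finite unramified extension $K'/K$, and then to reassemble into a $G$-valued representation by choosing lifts of Weyl-group elements inside $N_G(T)$.

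\textbf{Setup and reduction to a torus character.} Since $\overline{\rho}(I_K)\subset T(k_L)$ and the kernel of $N_G(T)\to W$ is $T$, the composite $\phi\colon\Gal_K\xrightarrow{\overline{\rho}}N_G(T)(k_L)\to W$ kills $I_K$, hence factors through the maximal unramified quotient and cuts out a finite unramified extension $K'/K$. Set $H=\Gal_{K'}$ and $W'=\Gal(K'/K)$. The restriction $\overline{\rho}|_H$ is a character $\overline{\chi}\colon H\to T(k_L)$, and the full $\overline{\rho}$ is reconstructed from $\overline{\chi}$ together with lifts $\overline{n}_w\in N_G(T)(k_L)$ of $\overline{\rho}(s(w))$ for a set-theoretic section $s\colon W'\to\Gal_K$. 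The action of $W'$ on $T$ through $W'\hookrightarrow W$ makes $\overline{\chi}$ $W'$-equivariant in the usual induction sense. My plan is to lift $\overline{\chi}$ to a crystalline character $\chi\colon H\to T(\mathcal{O}_{L'})$, for some finite extension $L'/L$, with the same equivariance, the correct abelianization, and $G$-regular Hodge-Tate cocharacters; and then to extend.

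\textbf{Lifting the torus character.} The map $T\to G^{\ab}$ has kernel $T^{\der}:=T\cap G^{\der}$, so the constraint $\rho^{\ab}=\psi$ forces $\chi$ modulo $T^{\der}$ to equal $\psi|_H$, but leaves freedom in the $T^{\der}$-direction. Over the unramified $K'/K$, class field theory (Lubin--Tate type characters) produces crystalline characters of $H$ into $T$ realizing any prescribed tuple $(\mu_\sigma)$ of Hodge-Tate cocharacters, after a finite extension of coefficients. I pick a $W'$-invariant such tuple whose $T^{\der}$-component makes each $\mu_\sigma$ regular as a cocharacter of $G$; this is possible because $G$-regularity is a Zariski-open condition on $X_*(T)_{\mathbb{Q}}$ that is preserved by the finite group $W'$, and the $T^{\der}$-component may be translated freely. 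Combined with the prescribed abelianization, this gives the desired crystalline $\chi$ whose $W'$-equivariance reduces modulo $\mathfrak{m}_{L'}$ to that of $\overline{\chi}$.

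\textbf{Reassembly and main obstacle.} It remains to choose lifts $\tilde{n}_w\in N_G(T)(\mathcal{O}_{L'})$ of $\overline{n}_w$ so that $\rho(s(w)h):=\tilde{n}_w\chi(h)$ is a homomorphism, equivalently the $2$-cocycle identity $\tilde{n}_{w_1w_2}^{-1}\tilde{n}_{w_1}\tilde{n}_{w_2}=\chi\bigl(s(w_1w_2)^{-1}s(w_1)s(w_2)\bigr)$ holds in $T(\mathcal{O}_{L'})$; its reduction modulo $\mathfrak{m}_{L'}$ is the cocycle of $\overline{\rho}$, which already vanishes, so the obstruction lies in the pro-$p$ formal torus. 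The main obstacle is to kill this obstruction while preserving $\rho^{\ab}=\psi$: the $G^{\ab}$-projection of the cocycle is pinned down by $\psi$ and vanishes automatically, whereas the $T^{\der}$-component must be killed using the residual $T^{\der}$-freedom in $\chi$ together with the choice of the $\tilde{n}_w$. This is the place where the paper presumably supplies a new cohomological input extending the arguments of Lin and B\"ockle--Iyengar--Pa\v{s}k\={u}nas beyond the $\GL_n$ case. Once the cocycle is killed, crystallinity of $\rho$ follows from that of $\chi=\rho|_H$ since $K'/K$ is unramified, regularity of Hodge-Tate weights was arranged in Step~2, and $\rho^{\ab}=\psi$ holds by construction. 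The ``moreover'' clause is easier: taking $\mu_\sigma=0$ removes the regularity task and avoids introducing any ramification, so $L'/L$ can be taken finite unramified.
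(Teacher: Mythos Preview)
Your proposal has a genuine gap: you explicitly leave the ``main obstacle'' unresolved, writing that ``the paper presumably supplies a new cohomological input.'' A proof sketch that locates the difficulty but does not overcome it is not a proof. Moreover, the paper's actual argument is quite different from the cocycle-killing scheme you outline, and the difference matters.

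Rather than lifting $\overline{\chi}$ to a $W'$-equivariant crystalline character directly and then trying to adjust a $2$-cocycle, the paper exploits that $\Gal_K/I_K$ is procyclic. The extension problem is governed by a single condition: a crystalline character $v\colon I_K\to T(\mathcal{O}_L)$ extends to $\Gal_K\to N_G(T)(\mathcal{O}_L)$ with $\rho(\Phi_K)=w^{-1}$ if and only if $v$ lies in the kernel of $\zeta(w)\otimes 1-1\otimes\Phi_K$ on $M_{T,\crys}$ (Lemma~\ref{extendability_NGT}). To produce such a $v$ lifting $\overline{\rho}|_{I_K}$, one does \emph{not} try to lift equivariantly. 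Instead one lifts an arbitrary preimage $\overline{\rho}'$ of $\overline{\rho}|_{I_K}$ under the averaging operator $\Xi=\sum_{i=0}^{f-1}\tau^i\otimes\Phi_K^{f-1-i}$ to some $\rho'\in M_{T,\crys}^0$, and then $\Xi\rho'$ automatically lies in the required kernel. This replaces your cocycle problem by a norm computation.

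The abelianization constraint is where the genuine new input enters, and you gloss over it. One must choose $\rho'$ so that $(\Xi\rho')^{\ab}=\psi|_{I_K}$. Since $\Xi$ acts as the Galois norm on the abelianization (the $\tau^i$ act trivially there), this amounts to prescribing $\mathrm{Nm}_{K_f/K}$ of the character $\rho'^{\ab}$. The paper uses the splitting $T\cong G^{\ab}\times T'$ (Lemma~\ref{right_inverse_existence}) and, on the $G^{\ab}$-factor, the explicit construction of \cite[Lemma~2.5]{BIP22} to produce a crystalline character of $K_f^\ast$ with specified norm to $K^\ast$ and specified reduction. The value $\rho(\Phi_K)$ is then chosen in $N_G(T)(\mathcal{O}_L)$ to project to $\psi(\Phi_K)^{-1}$, which is possible precisely because of the splitting. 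Your Step~2 asserts the existence of a suitable $\chi$ without supplying any of this.

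Finally, your justification of the ``moreover'' clause is incorrect. You cannot take $\mu_\sigma=0$: a crystalline character with trivial Hodge--Tate weights is unramified, so it cannot lift a nontrivial $\overline{\rho}|_{I_K}$. In the paper, the construction through Step~2 already works over a finite \emph{unramified} extension of $L$; the possibly ramified extension enters only in Step~3, when one corrects $\rho''^{\ab}$ by an unramified $Z^\circ$-valued twist and must pass through the isogeny $Z^\circ\to G^{\ab}$.
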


Theorem \ref{intro_qss_cryslift_fixeddet} is a generalization of the results of Lin \cite[Theorem 5, Theorem 6]{Lin20} and B\"ockle-Iyengar-Pa\v{s}k\={u}nas \cite[Proposition 2.6]{BIP22}. 

We also prove analogous results of Theorem \ref{intro_qss_cryslift_fixeddet} for potentially crystalline lifts of semisimple mod $p$ $L$-parameters for quasi-split tame groups over $K$. 
Let $G$ be a connected quasi-split reductive group over $K$. We assume that $G$ is tame, that is, $G$ splits over a tame Galois extension $E$ of $K$. Write $\widehat{G}$ for the Langlands dual group of $G$ and ${^L}G = \widehat{G} \rtimes \Gal_K$ for the $L$-group of $G$. 
Let $\widehat{T}$ be a maximal torus of $\widehat{G}$. 

The second main theorem of this paper is as follows:

\begin{theorem}[{Theorem \ref{qst_dRlift_fixeddet}}]
\label{intro_qst_dRlift_fixeddet}
%Let $K/\mathbb{Q}_p$ be a finite extension and 
Let $\overline{\rho} \colon \Gal_K \to {^L}G (\overline{\mathbb{F}}_p)$ be a semisimple mod $p$ $L$-parameter in the sense of \cite[Definition 3.2.5]{Lin23b}. 
Fix a maximal torus $S$ of $G$ such that $\overline{\rho}$ factors through ${^L}S (\overline{\mathbb{F}}_p)$ (see Theorem \ref{LS_factorization}).
%, which exists by Theorem \ref{LS_factorization}. 
%Assume that the composite $S \hookrightarrow G \twoheadrightarrow G^{\ab}$ has a right inverse, which is written as $i \colon G^{\ab} \to S$. 
Let $i \colon G^{\ab} \to S$ be a right inverse of the composite $S \hookrightarrow G \twoheadrightarrow G^{\ab}$. 
%Write $\overline{\psi} = \overline{\rho}^{\ab}$ for the composite of $\overline{\rho}$ and the map ${^L}G (\overline{\mathbb{F}}_p) \to {^L} G^{\ab}(\overline{\mathbb{F}}_p)$ induced by the composite $G^{\ab} \xrightarrow{i} S \hookrightarrow G$ and let $\psi \colon \Gal_K \to {^L} G^{\ab}(\overline{\mathbb{Z}}_p)$ be a potentially crystalline lift of $\overline{\psi}$.
For $A = \overline{\mathbb{F}}_p^{\ast}$ or $\overline{\mathbb{Z}}_p^{\ast}$ and an $L$-parameter $\tau \colon \Gal_K \to {^L}S(A)$, write $\tau^{\ab}$ for the composite of $\tau$ and the map ${^L}S (A) \to {^L}G^{\ab}(A)$ induced by the composite $G^{\ab} \xrightarrow{i} S \hookrightarrow G$ and we set $\overline{\psi} \coloneqq \overline{\rho}^{\ab}$. Let $\psi \colon \Gal_K \to {^L}G^{\ab}(\overline{\mathbb{Z}}_p)$ be a potentially crystalline lift of $\overline{\psi}$. 
Then $\overline{\rho}$ admits a potentially crystalline lift $\rho \colon \Gal_K \to {^L}G(\overline{\mathbb{Z}}_p)$ which has regular Hodge-Tate weights and satisfies $\rho^{\ab} = \psi$. 
\end{theorem}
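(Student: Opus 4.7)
My plan is to use the section $i$ to decompose $S$ as a product and reduce the problem to a lifting question for $L$-parameters into the $L$-group of a torus.

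Set $N \coloneqq \ker(S \twoheadrightarrow G^{\ab})$; this is a $K$-subtorus of $G$, and in fact a maximal torus of the derived subgroup $G^{\der}$, so every root of $G_{\overline{K}}$ restricts non-trivially to $N_{\overline{K}}$. The section $i$ yields an isomorphism of $K$-tori $S \cong N \times G^{\ab}$, and dualizing gives a decomposition ${^L}S \cong {^L}N \times_{\Gal_K} {^L}G^{\ab}$. Under this decomposition, an $L$-parameter $\tau \colon \Gal_K \to {^L}S(A)$ amounts to a pair $(\tau_N, \tau^{\ab})$ of $L$-parameters into ${^L}N(A)$ and ${^L}G^{\ab}(A)$ respectively, and the abelianization map of the theorem is the projection onto the second factor. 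In particular, $\overline{\rho}$ (which factors through ${^L}S(\overline{\mathbb{F}}_p)$ by Theorem \ref{LS_factorization}) corresponds to a pair $(\overline{\rho}_N, \overline{\psi})$.

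The next step is to produce a potentially crystalline lift $\rho_N \colon \Gal_K \to {^L}N(\overline{\mathbb{Z}}_p)$ of $\overline{\rho}_N$ with a prescribed Hodge-Tate cocharacter. Given such a $\rho_N$, the pair $(\rho_N, \psi)$ defines a potentially crystalline $L$-parameter into ${^L}S(\overline{\mathbb{Z}}_p)$, and post-composition with ${^L}S \hookrightarrow {^L}G$ yields the desired $\rho$, with $\rho^{\ab} = \psi$ by construction. Existence of $\rho_N$ with free choice of Hodge-Tate cocharacter is essentially the torus case of the lifting problem: after restricting to $\Gal_E$, where $N$ splits, such a lift reduces to a finite collection of crystalline characters of $\Gal_E$ that may be prescribed freely, while the descent back to $\Gal_K$ is controlled by the explicit tame $L$-group structure on $N$. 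I would argue this in a manner parallel to the torus ingredient that underlies the proof of Theorem \ref{intro_qss_cryslift_fixeddet}.

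For the regularity of the Hodge-Tate weights, note that the $G^{\ab}$-component of the combined Hodge-Tate cocharacter is killed by every root of $G$, so regularity depends solely on the $N$-component. Since every root of $G$ restricts non-trivially to $N$, one may twist $\rho_N$ by a potentially crystalline ${^L}N$-valued character that is trivial modulo $p$ and has sufficiently generic and large Hodge-Tate weights; this achieves regularity without disturbing the prescribed mod $p$ reduction or the value of $\rho^{\ab}$. The main obstacle is the construction of $\rho_N$ with the correct reduction and complete flexibility in its Hodge-Tate cocharacter: this is a lifting problem for $L$-parameters into the $L$-group of a tame torus, which should be tractable by direct construction but requires careful bookkeeping of the Galois action and its interaction with the semidirect-product structure of ${^L}N$.
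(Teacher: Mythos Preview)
Your plan matches the paper's proof: decompose $S \cong G^{\ab} \times S'$ via the section (your $N$ is the paper's $S'$), set $\rho' = (\psi, [\overline{\rho}_{S'}])$ where $[\cdot]$ is the Teichm\"uller lift (finite image, hence automatically potentially crystalline), and then achieve regularity by twisting inside ${^L}S$ by a character with trivial abelianization built from a regular cocharacter of $\widehat{S}_E$ (whose existence is Lemma~\ref{trivdet_regular}), correcting by the Teichm\"uller lift of its reduction to preserve $\overline{\rho}$. The ``main obstacle'' you flag is exactly what the paper dispatches with the Teichm\"uller lift together with the $p$-adic local Langlands correspondence for tori (Theorem~\ref{padicLLC_tori_divcoef} and \cite[Lemma~5.2.6]{Lin23b}), so no bookkeeping of the semidirect product is needed.
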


%\begin{remark}
%If we consider the case that $\overline{\rho}$ is defined over $\overline{\mathbb{F}}_p$, Theorem \ref{intro_qst_dRlift_fixeddet} is stronger than Theorem \ref{intro_qss_cryslift_fixeddet}. But Theorem \ref{intro_qss_cryslift_fixeddet} will be useful in considering the crystalline lifting problem with a fixed determinant in a given coefficient ring $\mathcal{O}_L$. 
%\end{remark}

The existence of crystalline lifts with fixed determinant in the case of $G = \GL_m$ is proved by B\"ockle-Iyengar-Pa\v{s}k\={u}nas \cite[Proposition 2.6, Proposition 2.7]{BIP22}. In the paper, they firstly proved the case where $\overline{\rho} \colon \Gal_K \to \GL_m (\overline{\mathbb{F}}_p)$ is absolutely irreducible and the general case is deduced from the former case using the geometry of the Emerton-Gee stacks. 
Also, Lin \cite[Theorem 5, Theorem 6]{Lin20} showed the existence of crystalline lifts with regular Hodge-Tate weights of quasi-semisimple mod $p$ $G$-valued representations when $G$ is a split reductive group over $\mathcal{O}_K$ (see \cite[Theorem 5.3.1]{Lin23b} for semisimple mod $p$ $L$-parameters for quasi-split tame groups).
In this paper, we combine the methods of \cite{BIP22} and \cite{Lin20}, \cite{Lin23b} to prove the existence of crystalline lifts of $\overline{\rho}$ with fixed abelianization. 

\begin{remark}
Let $\overline{\rho} \colon \Gal_K \to G(k_L)$ be a continuous $G$-valued representation and $R_{\overline{\rho}}^{\Box}$ be the universal framed deformation ring
constructed by Balaji \cite[Theorem 1.2.2]{Bal12}. Write $\mathcal{X}_{\overline{\rho}}^{\Box} \coloneqq \Spf (R_{\overline{\rho}}^{\Box})^{\rig}$ for the rigid generic fiber of $\Spf (R_{\overline{\rho}}^{\Box})$
as constructed in \cite[Definition 7.1.3]{dJ95}.
For $G = \GSp_{2n}$, Zariski density of crystalline points on an irreducible component of $\mathcal{X}_{\overline{\rho}}^{\Box}$ which has a crystalline point is shown in \cite{Aok24a} using the geometry of the trianguline deformation spaces for $\GSp_{2n}$. 
To prove the full version of the Zariski density theorem for $\GSp_{2n}$, we have to show that each irreducible component of $\mathcal{X}_{\overline{\rho}}^{\Box}$ has a crystalline point. 
By the results of Pa\v{s}k\={u}nas-Quast \cite{PQ24b}, it suffices to show that any mod $p$ $G$-valued representation $\overline{\rho}$ has a crystalline lift with fixed abelianization. We expect that Theorem \ref{intro_qss_cryslift_fixeddet} is the first step for the full version of the Zariski density theorem for split reductive groups. 
\end{remark}

The strategies of the proofs of Theorem \ref{intro_qss_cryslift_fixeddet} and Theorem \ref{intro_qst_dRlift_fixeddet} are based on Lin's strategies in \cite{Lin20} and \cite{Lin23b}, respectively, where the existence of crystalline lifts of semisimple mod $p$ $G$-valued representations is shown. 
In order to control the abelianizations of crystalline lifts, we use the calculations of abelianizations of Galois representations via the reciprocity map of local class field theory as in the work of B\"ockle-Iyengar-Pa\v{s}k\={u}nas \cite{BIP22}. 

\subsection*{Acknowledgement}
The author thanks his advisor Tetsushi Ito for valuable comments on the draft of the paper. 
He also thanks Gebhard B\"ockle, Noriyuki Abe, and Teruhisa Koshikawa for comments on the abelianizations of reductive groups. 

\section{Notation}
Let $p$ be a prime number. 
%Mat, \mathbb{A}, (p-adic norm |-|_p)
For a group $G$ and a subset $S \subset G$, we write $N_G (S)$ for the normalizer of $S$ in $G$ and $C_G (S)$ for the centralizer of $S$ in $G$. 
We fix an algebraic closure $\overline{\mathbb{Q}}_p$ of $\mathbb{Q}_p$. Let $K$ be a finite extension of ${\mathbb{Q}}_p$ contained in $\overline{\mathbb{Q}}_p$.  
%and $K_0$ be the maximal unramified subextension of ${\mathbb{Q}}_p$ in $K$. 
%We write $\zeta_m$ for a fixed $m$-th roots of unity.
%Let $K_n \coloneqq K(\zeta_{p^n})$ and $K_{\infty} \coloneqq \bigcup_n K_n$. Let $k$ be the residue field of $K$, $k'$ be the residue field $K_{\infty}$ and $K'_0 \coloneqq W(k')$. We write $\Gamma \coloneqq \Gal (K_{\infty} / K)$. 
%Let $|-|_K$ be the $p$-adic norm on $K$ normalized by $|p|_K = p^{-1}$. 
Let $\mathbb{C}_p$ be the $p$-adic completion of $\overline{\mathbb{Q}}_p$. Let $L$ be a sufficiently large finite extension of $K$ such that |$\Hom_{\mathbb{Q}_p} (K, L)| = [K \colon \mathbb{Q}_p]$. Let $k_L$ be the residue field of $L$ and $\Sigma_L \coloneqq \Hom_{{\mathbb{Q}}_p} (L, \mathbb{C}_p)$. 
Let ${\Gal}_K \coloneqq \Gal(\overline{\mathbb{Q}}_p/K)$ be the absolute Galois group of $K$, and $I_K$ be the inertia subgroup of $\Gal_K$. 
%Let $\epsilon \colon {\Gal}_K \to {\mathbb{Z}}_p^{\ast}$ be the $p$-adic cyclotomic character. 
Let $W_K$ be the Weil group of $K$. For a finite Galois extension $E/K$, let $W_{E/K} \coloneqq W_K / \overline{W_E^{\der}}$, where $\overline{W_E^{\der}}$ is the closure of the derived subgroup $W_E^{\der}$ of $W_E$, be the relative Weil group as defined in \cite{Tat79}. Write $\widehat{\mathbb{Z}} \coloneqq \prod_{\ell \colon \mathrm{prime}} \mathbb{Z}_{\ell}$. 
We fix a uniformizer $\varpi_K \in {\mathcal{O}}_K$ and normalize the reciprocity isomorphism $\rec_K \colon K^{\ast} \xrightarrow{\sim} W_K^{\text{ab}}$ of local class field theory such that $\varpi_K$ is mapped to the geometric Frobenius automorphism. 
%For a $(\varphi, \Gamma)$-module $D$, we define a Herr complex $C_{\Herr}^{\bullet} (D)$ as 
%\[
%D \xrightarrow{(\varphi - 1, \gamma - 1)} D \oplus D \xrightarrow{(1 - \gamma) \oplus (\varphi - 1)} D
%\]
%where $\gamma$ is a topological generator of $\Gamma$ and write $H_{\Herr}^i (D)$ for $i$-th cohomology of the complex $C_{\Herr}^{\bullet} (D)$ for $i \in [0, 2]$. 

For an algebraic group $G$ over a commutative ring $R$, write $G^{\der} \coloneqq [G, G]$ for the derived subgroup of $G$, $G^{\ab} \coloneqq G / G^{\der}$ for the abelianization of $G$, and $G^{\circ}$ for the identity component of $G$. 
Write $X^{\ast} (G) \coloneqq \Hom_R (G, \mathbb{G}_{m, R})$ for the character group of $G$ and $X_{\ast} (G) \coloneqq \Hom_R (\mathbb{G}_{m, R}, G)$ for the cocharacter group of $G$. 

%Let $\det \colon G \twoheadrightarrow G^{ab}$ be the abelianization map. 
Let $A$ be a commutative topological $R$-algebra. 
%For a continuous representation $\rho \colon \Gal_K \to G(A)$ (resp.\ a continuous $L$-parameter $\rho \colon \Gal_K \to {^L}G(A))$, let $\rho^{\ab} \colon \Gal_K \to G^{\ab}(A)$ (resp.\ $\rho^{\ab} \colon \Gal_K \to {^L}G^{\ab}(A)$) be the composite of $\rho$ with the abelianization map $G(A) \to G^{\ab}(A)$ (resp.\ ${^L}G(A) \to {^L}G^{\ab}(A)$).  
For a continuous representation $\rho \colon \Gal_K \to G(A)$, let $\rho^{\ab} \colon \Gal_K \to G^{\ab}(A)$ be the composite of $\rho$ with the abelianization map $G(A) \to G^{\ab}(A)$. 
%We call $\overline{\rho}^{\det}$ the \emph{determinant} of $\overline{\rho}$. 

%Witt ring

%Let $\GSp_{2n}$ be the split reductive group over $\mathbb{Z}$ defined by 
%\[
%\GSp_{2n} (R) = \{ g \in \GL_{2n} (R) \ | \ {^t}gJg = \text{sim} (g) J \ \text{for some} \ \text{sim} (g) \in R^{\ast} \}
%\]
%for any commutative ring $R$, where $J$ is defined by $\mathbf{x} = (x_{ij})_{i, j}$ in $\GL_{2n}$ such that $x_{ij} = (-1)^{\text{sgn} (j - i)}$ if $i + j = 2n + 1$ and otherwise $x_{ij} = 0$. There is the natural faithful algebraic representation $r_{\std} \colon \GSp_{2n} \to \GL_{2n}$ of the group scheme over $\mathbb{Z}$ by the definition. 
%We define the standard Borel subgroup as $B = \GSp_{2n} \cap B_{2n}$ and the maximal split torus $T = \GSp_{2n} \cap T_{2n}$ of $\GSp_{2n}$, embedding $\GSp_{2n}$ into $\GL_{2n}$ by $r_{\std}$. 
%We fix the isomorphism $T \cong {\mathbb{G}}_m^{n+1}$ such that $(t_1, \ldots, t_{n+1}) \in {\mathbb{G}}_m^{n+1} \cong T$ is sent to the element 
%\[
%(t_1, \ldots, t_n, t_n^{-1} t_{n+1}, \ldots, t_1^{-1} t_{n+1}) \in T_{2n}
%\]
%by the embedding $T \to T_{2n}$. 

\section{Crystalline extensions to normalizers of tori}

In this section, let $G$ be a connected split reductive group over $\mathcal{O}_K$ and $T$ be a maximal torus of $G$. 
%We assume that the composite $T \hookrightarrow G \to G^{\ab}$ has a right inverse, that is, there exists a homomorphism $G^{\ab} \to T$ such that the composite $G^{\ab} \to T \to G^{\ab}$ is the identity. 
%Note that $G^{\ab} \cong Z^{\circ} / (Z^{\circ} \cap G^{\der})$, where $Z$ is the center of $G$, is a torus. 

%\subsection{Extendability of torus-valued Galois representations}
We recall the notation of \cite[\S 4]{Lin20}. 
%Weylの性質あたりから書いておく
%Fix an algebraic closure $\overline{K}$ of $K$. 
Let $\Phi_K \in \Gal_K$ be a lift of a topological generator of $\Gal_K / I_K \cong \widehat{\mathbb{Z}}$. Let $W(G, T)$ be the Weyl group scheme of $(G, T)$. For the ease of notation, write $W(G, T)$ for the group $W(G, T)(\mathcal{O}_L) \cong W(G, T)(k_L)$, which is a subgroup of the finite group $W(G, T)(\overline{L})$. 

Let $M_{T, \crys}$ denote the set of continuous $T(\mathcal{O}_L)$-valued representations $I_K \to T(\mathcal{O}_L)$ of the inertia group $I_K$, which can be extended to a crystalline representation ${\Gal}_{K'} \to T(\mathcal{O}_L)$ for some finite unramified extension $K' / K$ inside $\overline{K}$. 
Let $M_{T, k_L}$ denote the set of continuous mod $\varpi_L$ representations $I_K \to T(k_L)$ of $I_K$.

Both $M_{T, \crys}$ and $M_{T, k_L}$ are naturally equipped with the structures of $\mathbb{Z}[W(G, T)]$-modules defined by $wv \coloneqq (\sigma \mapsto wv(\sigma)w^{-1})$ for $w \in W(G, T)$ and $v \in M_{T, \crys}$ (or $v \in M_{T, k_L}$). Similarly, they are naturally equipped with the structures of $\mathbb{Z}[\Gal_K / I_K]$-modules defined by $\alpha v \coloneqq (\sigma \mapsto v(\alpha^{-1} \sigma \alpha))$ for $\alpha \in \Gal_K$ and $v \in M_{T, \crys}$ (or $v \in M_{T, k_L}$). 

%Both $M_{T, \crys}$ and $M_{T, k_L}$ naturally equip the abelian group structures and a $\mathbb{Z}[W(G, T)]$-module (resp.\ $\mathbb{Z}[\Gal_K / I_K]$-module) structure defined by $wv \coloneqq (\sigma \mapsto wv(\sigma)w^{-1})$ (resp.\ $\alpha v \coloneqq (\sigma \mapsto v(\alpha^{-1} \sigma \alpha))$) for $w \in W(G, T)$ and $v \in M_{T, \crys}$ (resp.\ for $\alpha \in \Gal_K$ and $v \in M_{T, \crys}$). 

\begin{definition}
\begin{enumerate}
\item A continuous $G$-valued representation $\overline{\rho} \colon \Gal_K \to G(\overline{\mathbb{F}}_p)$ is said to be \emph{elliptic} if it does not factor through any $P(\overline{\mathbb{F}}_p) \subset G(\overline{\mathbb{F}}_p)$ where $P$ is a parabolic subgroup of $G_{\overline{\mathbb{F}}_p}$. 

\item A subgroup $\Gamma \subset G(\overline{\mathbb{F}}_p)$ is said to be \emph{$G$-completely reducible} if for any parabolic subgroup $P$ of $G_{\overline{\mathbb{F}}_p}$ such that $\Gamma \subset P(\overline{\mathbb{F}}_p)$, a Levi subgroup $L$ of $P$ also satisfies $\Gamma \subset L(\overline{\mathbb{F}}_p)$. We also say that a continuous $G$-valued representation $\overline{\rho} \colon \Gal_K \to G(\overline{\mathbb{F}}_p)$ is \emph{$G$-completely reducible} if the image of $\overline{\rho}$ is $G$-completely reducible. 

\item We say $\overline{\rho} \colon \Gal_K \to G(\overline{\mathbb{F}}_p)$ is \emph{quasi-semisimple} if there exists a maximal torus $T'$ of $G_{\overline{\mathbb{F}}_p}$ such that $\overline{\rho} (I_K) \subset T'(\overline{\mathbb{F}}_p)$ and $\overline{\rho} (\Gal_K) \subset N_{G_{\overline{\mathbb{F}}_p}} (T')(\overline{\mathbb{F}}_p)$. 
\end{enumerate}
\end{definition}

Note that an elliptic continuous $G$-valued representation $\Gal_K \to G(\overline{\mathbb{F}}_p)$ is $G$-completely reducible. 

%後々ellipticyによってfixed det crys liftを前節の定理の議論によって構成するため、前節でellipticyを導入しておいて、前節の定理をもう少し詳しく証明しておく。

\begin{theorem}
\label{reducible_quasi_semisimple}
If a continuous $G$-valued representation $\Gal_K \to G(\overline{\mathbb{F}}_p)$ is $G$-completely reducible, then $\overline{\rho}$ is quasi-semisimple. 
\end{theorem}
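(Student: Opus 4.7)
The plan is to reduce to the tame quotient of $\Gal_K$, identify the inertia image as a cyclic group of semisimple elements of order prime to $p$, and then locate a maximal torus that contains it and is normalized by the full Galois image.

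First, I would show that $\overline{\rho}(P_K) = 1$, where $P_K \subset I_K$ is the wild inertia. Since $P_K$ is normal in $\Gal_K$, its image $\overline{\rho}(P_K)$ is normal in $\overline{\rho}(\Gal_K)$, and by the theorem of Bate--Martin--R\"ohrle on normal subgroups of $G$-completely reducible groups, it is itself $G$-completely reducible. Being a continuous image of a pro-$p$ group in the discrete group $G(\overline{\mathbb{F}}_p)$, it is a finite $p$-group, so all its elements are unipotent. By the Borel--Tits theorem it is contained in the unipotent radical $R_u(P)$ of some parabolic $P$ of $G_{\overline{\mathbb{F}}_p}$; the $G$-complete reducibility then forces $\overline{\rho}(P_K) \subset L(\overline{\mathbb{F}}_p)$ for some Levi $L$ of $P$, and $L \cap R_u(P) = \{1\}$ yields $\overline{\rho}(P_K) = 1$.

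Consequently $\overline{\rho}$ factors through the tame quotient of $\Gal_K$, topologically generated by a Frobenius lift $\sigma$ and a tame generator $\tau$ satisfying $\sigma \tau \sigma^{-1} = \tau^q$. Setting $s \coloneqq \overline{\rho}(\tau)$ and $g \coloneqq \overline{\rho}(\sigma)$, we have $\overline{\rho}(I_K) = \langle s \rangle$ and $\overline{\rho}(\Gal_K) = \langle s, g \rangle$. Because the tame inertia quotient has pro-order prime to $p$, the element $s$ has order prime to $p$ and is semisimple in $G(\overline{\mathbb{F}}_p)$.

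The last and main step is to produce a maximal torus $T'$ of $G_{\overline{\mathbb{F}}_p}$ with $s \in T'(\overline{\mathbb{F}}_p)$ and $g \in N_{G_{\overline{\mathbb{F}}_p}}(T')(\overline{\mathbb{F}}_p)$. Let $C \coloneqq C_{G_{\overline{\mathbb{F}}_p}}(s)^\circ$: this is a connected reductive subgroup of maximal rank with $s \in Z(C)$, so $s$ lies in every maximal torus of $C$, and the maximal tori of $C$ are exactly the maximal tori of $G_{\overline{\mathbb{F}}_p}$ containing $s$. The relation $gsg^{-1} = s^q$, together with $\langle s^q \rangle = \langle s \rangle$ (since $\gcd(q, |\langle s \rangle|) = 1$), yields $g C g^{-1} = C$, so $g$ acts on $C$ through a finite-order automorphism $\operatorname{Int}(g)$, and finding the desired $T'$ reduces to producing an $\operatorname{Int}(g)$-stable maximal torus of $C$. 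The hardest part of the argument will be exactly this step: for an arbitrary finite-order automorphism of a connected reductive group, an invariant maximal torus need not exist when the automorphism has nontrivial unipotent part, so the argument must use the $G$-complete reducibility of the full image $\langle s, g \rangle$---not merely of $\langle s \rangle$---to exclude the obstruction. A natural route is first to produce a $g$-stable Borel subgroup $B$ of $C$ by a Lefschetz-type fixed-point argument on the flag variety of $C$, and then to find a $g$-stable maximal torus inside $B$ by analysing the $\operatorname{Int}(g)$-action on the $R_u(B)$-torsor of maximal tori of $B$, where the relevant cohomological obstruction vanishes by virtue of the $G$-completely reducible hypothesis. Once such a $T'$ is obtained, the inclusions $\overline{\rho}(I_K) = \langle s \rangle \subset T'(\overline{\mathbb{F}}_p)$ and $\overline{\rho}(\Gal_K) = \langle s, g \rangle \subset N_{G_{\overline{\mathbb{F}}_p}}(T')(\overline{\mathbb{F}}_p)$ follow immediately.
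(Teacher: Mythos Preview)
The paper does not give its own proof: it simply cites \cite[Theorem 4]{Lin20}. Your outline is essentially a reconstruction of Lin's argument, and the first two steps are correct and well explained. The reduction to the tame quotient via Bate--Martin--R\"ohrle (normal subgroups of $G$-cr groups are $G$-cr) together with Borel--Tits for the unipotent $p$-group $\overline{\rho}(P_K)$ is exactly right, and the passage to the centralizer $C = C_{G_{\overline{\mathbb{F}}_p}}(s)^{\circ}$ and the reformulation as the search for an $\operatorname{Int}(g)$-stable maximal torus of $C$ is also correct.

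Where your write-up becomes a genuine sketch is the last paragraph, and you flag this yourself. Producing a $g$-stable Borel of $C$ is standard (Steinberg), but the passage from a stable Borel to a stable maximal torus is precisely the content of Lin's ``variant of the Steinberg--Winter theorem'', and your one-line claim that ``the relevant cohomological obstruction vanishes by virtue of the $G$-completely reducible hypothesis'' hides the actual work. The obstruction lives in nonabelian cohomology of the finite cyclic group $\langle g \rangle$ with values in $R_u(B)(\overline{\mathbb{F}}_p)$, and when $p \mid |g|$ there is no formal vanishing; one must really explain \emph{how} $G$-complete reducibility of $\langle s, g\rangle$ in $G$ (not merely in $C$) intervenes. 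Lin's argument does this by a careful analysis that is not a straightforward $H^1$ computation, so if you want a self-contained proof here you should either reproduce that argument or cite it. As written, your proposal is a correct strategy with the key lemma asserted rather than proved.
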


\begin{proof}
See \cite[Theorem 4]{Lin20}. 
\end{proof}

%For constructing crystalline lifts with a fixed determinant, 
For showing the main theorem, we will reduce to the case where $\overline{\rho} \colon \Gal_K \to G(\overline{\mathbb{F}}_p)$ is elliptic. By Theorem \ref{reducible_quasi_semisimple}, it suffices to consider the existence of a lift $\rho \colon \Gal_K \to N_G (T)(\mathcal{O}_L)$ of $\overline{\rho}$ satisfying $\rho (\Phi_K) = w^{-1}$ for an element $w \in N_G (T)(\mathcal{O}_L)$. 

In the following, we generalize Lin's argument in \cite[\S 4]{Lin20} to the case that $\rho (\Phi_K)$ is not of finite order and the homomorphism $\mathbb{Z} \to G(\mathcal{O}_L), \ 1 \mapsto \rho (\Phi_K)$ can be topologically extended to a homomorphism $\widehat{\mathbb{Z}} \to G(\mathcal{O}_L)$. 

\begin{definition}
For a Hausdorff topological group $G$, we say that a homomorphism $\mathbb{Z} \to G, \ 1 \mapsto g$ is \emph{extendable} if it can be extended to a (unique) continuous homomorphism $\widehat{\mathbb{Z}} \to G$. 
\end{definition}

\begin{lemma}
\label{G_extendability}
Let $\ell$ be a prime number, $m$ be a positive integer, $G$ be a locally profinite group, and $g \in G$. 
%For a prime number $l$, a positive integer $m$, a locally profinite group $G$ and an element $g \in G$, 
\begin{enumerate}
\item If $\lim_{k \to \infty} g^{\ell^k} = e$, the homomorphism $\mathbb{Z} \to G, \ 1 \mapsto g$ is extendable. 
\item The homomorphism $1 \mapsto g$ is extendable if and only if the homomorphism $1 \mapsto g^m$ is extendable. 
\end{enumerate}
\end{lemma}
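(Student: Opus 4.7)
The plan is to reformulate extendability as a clean condition on open subgroups, after which both parts become essentially trivial. Since $G$ is locally profinite, open compact subgroups of $G$ form a fundamental system of neighborhoods of the identity $e$. I claim that the homomorphism $\phi \colon \mathbb{Z} \to G$, $n \mapsto g^n$ is extendable if and only if the following condition $(\ast)$ holds: for every open compact subgroup $U \subset G$, there exists $N \geq 1$ with $g^N \in U$.

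To see this equivalence, note first that a continuous homomorphism from a dense subgroup of $\widehat{\mathbb{Z}}$ to a locally compact Hausdorff group extends uniquely (extension is forced, uniqueness follows from Hausdorffness of $G$, and the extension is automatically a homomorphism because multiplication on both sides is continuous). So extendability amounts to continuity of $\phi$ for the profinite topology on $\mathbb{Z}$. Being a homomorphism, continuity reduces to continuity at $0$, namely, for every open $V \ni e$ the preimage $\phi^{-1}(V)$ is open in the profinite topology on $\mathbb{Z}$. Taking $V = U$ a compact open subgroup, $\phi^{-1}(U)$ is an additive subgroup of $\mathbb{Z}$, and such a subgroup is open for the profinite topology if and only if it is nonzero; this nonzeroness is exactly $(\ast)$. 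Existence of the extension as an actual map $\widehat{\mathbb{Z}} \to G$ then follows because for a sequence $n_i \to \widehat n$ in $\widehat{\mathbb{Z}}$, continuity at $0$ forces $(g^{n_i})$ to be a Cauchy net that eventually lies in the translate $g^{n_{i_0}} U$ of a compact set, hence converges in $G$.

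Given the characterization $(\ast)$, part (1) is immediate: if $g^{\ell^k} \to e$, then for every open compact subgroup $U$ of $G$ some $g^{\ell^k}$ lies in $U$, so we take $N = \ell^k$. For part (2), fix an open compact subgroup $U$ of $G$. If the map $1 \mapsto g$ is extendable, choose $N$ with $g^N \in U$; then $(g^m)^N = (g^N)^m \in U$ because $U$ is a subgroup, so $1 \mapsto g^m$ is extendable. Conversely, if $1 \mapsto g^m$ is extendable, choose $N$ with $(g^m)^N \in U$; then $g^{mN} \in U$ shows $(\ast)$ for $g$ with the integer $mN$, and $1 \mapsto g$ is extendable.

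There is no real obstacle: the only point requiring a moment's care is the existence (as opposed to the uniqueness) of the extension $\widehat{\mathbb{Z}} \to G$, but this is supplied by the fact that a locally profinite group is locally compact Hausdorff, so Cauchy nets with relatively compact image converge. Everything else is a direct unwinding of definitions together with the observation that open compact subgroups are cofinal among neighborhoods of $e$ in $G$.
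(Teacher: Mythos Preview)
Your argument is correct, and it takes a genuinely different route from the paper. You first isolate an intrinsic characterization of extendability --- namely, that for every compact open subgroup $U$ some positive power of $g$ lies in $U$ --- and then both parts drop out in one line each. The paper instead argues each part by a direct construction: for (1) it observes that the hypothesis lets one extend $\mathbb{Z} \to G$ to a continuous map $\mathbb{Z}_\ell \to G$ (using a basis of profinite open subgroups), and then precomposes with the projection $\widehat{\mathbb{Z}} \twoheadrightarrow \mathbb{Z}_\ell$; for (2) it takes the given extension $\widehat{f}' \colon m\widehat{\mathbb{Z}} \to G$ and explicitly glues the translates $a + m\widehat{\mathbb{Z}} \to G$, $a+mb \mapsto g^a \widehat{f}'(mb)$ over $a \in \{0,\dots,m-1\}$. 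Your approach is cleaner and yields a reusable criterion, at the cost of appealing (implicitly) to completeness of locally compact Hausdorff groups for the existence of the extension; the paper's approach is more hands-on and avoids any abstract completeness statement, but treats the two parts by unrelated ad hoc tricks.
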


\begin{proof}
(1) The homomorphism $\mathbb{Z} \to G, \ 1 \mapsto g$ can be extended to $\mathbb{Z}_{\ell} \to G$ because there exists a system $\{ U_i \}_{i \geq 1}$ of neighborhoods of the unity $e \in G$, where each $U_i$ is a profinite open subgroup of $G$. Then (1) follows by taking the composite of the projection $\widehat{\mathbb{Z}} \twoheadrightarrow \mathbb{Z}_{\ell}$ with a map $\mathbb{Z}_{\ell} \to G$. 

(2) The ``only if'' part is obvious. If $1 \mapsto g^m$ is extendable, there exists a continuous homomorphism $\widehat{f}' \colon m \widehat{\mathbb{Z}} \to G$ which extends $m \mathbb{Z} \to G$, that is the restriction of $f \colon \mathbb{Z} \to G,  \ 1 \mapsto g$. For each $a \in [0, m-1]$, the map 
\[
\widehat{f}'_a \colon a + m \widehat{\mathbb{Z}} \to G, \ a + mb \mapsto f(a)\widehat{f}'(mb)
\]
is continuous. Since $\{ a + m\widehat{\mathbb{Z}} \ | \ a \in [0, m-1]\}$ is an open covering of $\widehat{\mathbb{Z}}$, the map $\bigsqcup_{a \in [0, m-1]} \widehat{f}'_a \colon \widehat{\mathbb{Z}} \to G$ is a continuous homomorphism which extends $f$. 
\end{proof}

%topological torsion caseのlemma

We prove the following lemma, which is proved by Lin \cite[Lemma 4]{Lin20} when $w$ has finite order. 

\begin{lemma}
\label{extendability_NGT}
Let $\zeta \colon N_G (T) \to W(G, T)$ be the quotient map. 
%Let $$
\begin{enumerate}
\item Let $v \colon I_K \to T(\mathcal{O}_L)$ be a continuous representation such that $v \in M_{T, \crys}$. For an element $w \in N_G (T)(\mathcal{O}_L)$, the representation $v$ extends to a continuous representation $\rho \colon \Gal_K \to N_G (T)(\mathcal{O}_L)$ satisfying $\rho (\Phi_K) = w^{-1}$ and $\rho |_{I_K} = v$ if and only if 
\[
v \in \ker (M_{T, \crys} \xrightarrow{\zeta(w) \otimes 1 - 1 \otimes \Phi_K} M_{T, \crys}).
\]

\item Let $\overline{v} \colon I_K \to T(k_L)$ be a continuous representation such that $\overline{v} \in M_{T, k_L}$. 
For an element $\overline{w} \in N_G (T)(k_L)$, the representation $\overline{v}$ extends to a continuous representation $\overline{\rho} \colon \Gal_K \to N_G (T)(k_L)$ satisfying $\overline{\rho} (\Phi_K) = \overline{w}^{-1}$ and $\overline{\rho} |_{I_K} = \overline{v}$ if and only if 
\[
\overline{v} \in \ker (M_{T, k_L} \xrightarrow{\zeta(\overline{w}) \otimes 1 - 1 \otimes \Phi_K} M_{T, k_L}).
\]
\end{enumerate}
\end{lemma}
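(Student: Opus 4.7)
The plan is to adapt Lin's argument in \cite[Lemma 4]{Lin20}, which treats the case where $w$ has finite order, to the general situation; the only genuinely new difficulty will be a continuous extension step.

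For the necessity direction of (1), I would assume such a $\rho$ exists and expand $\rho(\Phi_K \sigma)$ for $\sigma \in I_K$ in two ways using $\Phi_K \sigma = (\Phi_K \sigma \Phi_K^{-1}) \Phi_K$; this gives $v(\Phi_K \sigma \Phi_K^{-1}) = w^{-1} v(\sigma) w$. Since the conjugation action of $N_G(T)$ on $T$ factors through $W(G, T)$, translating both sides via the $\mathbb{Z}[W(G,T)]$- and $\mathbb{Z}[\Gal_K/I_K]$-module structures on $M_{T,\crys}$ recasts the identity as $\Phi_K v = \zeta(w) v$, i.e., $v \in \ker(\zeta(w) \otimes 1 - 1 \otimes \Phi_K)$.

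For sufficiency, I would first define $\rho$ on the dense subgroup $H \coloneqq I_K \cdot \langle \Phi_K \rangle \subset \Gal_K$ by $\rho(\sigma \Phi_K^n) \coloneqq v(\sigma) w^{-n}$. Since the $W(G,T)$-action and the $\Gal_K/I_K$-action on $M_{T,\crys}$ commute---they act on the target and on the source of elements of $M_{T,\crys}$ respectively---the hypothesis $\zeta(w) v = \Phi_K v$ iterates to $\zeta(w)^n v = \Phi_K^n v$ for every $n \in \mathbb{Z}$, which is precisely the relation needed to verify that $\rho$ is a group homomorphism on $H$. Continuity of $\rho$ on $H$ is then automatic because $I_K$ is open in $H$ and $\rho|_{I_K} = v$ is continuous.

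The main new obstacle beyond Lin's finite-order case will be extending $\rho$ continuously from $H$ to all of $\Gal_K$. This reduces to extending the homomorphism $\mathbb{Z} \to N_G(T)(\mathcal{O}_L)$, $n \mapsto w^{-n}$, to a continuous homomorphism $\widehat{\mathbb{Z}} \to N_G(T)(\mathcal{O}_L)$. I would observe that $T(\mathcal{O}_L)$ is profinite (as $T$ is a split torus over $\mathcal{O}_L$), hence so is $N_G(T)(\mathcal{O}_L)$ since $W(G, T) = N_G(T)(\mathcal{O}_L)/T(\mathcal{O}_L)$ is finite; setting $m \coloneqq |W(G, T)|$ puts $w^m$ in $T(\mathcal{O}_L)$, and by Lemma \ref{G_extendability}(2) the extendability of $1 \mapsto w^{-1}$ reduces to that of $1 \mapsto w^{-m}$ inside the profinite abelian group $T(\mathcal{O}_L)$, where the closure of any cyclic subgroup is procyclic. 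For part (2), $N_G(T)(k_L)$ is finite, so $\overline{w}^{-1}$ automatically has finite order and the extension step is trivial; the argument there reduces exactly to Lin's.
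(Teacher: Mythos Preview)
Your proposal is correct and follows essentially the same route as the paper: reduce to Lin's argument for the extension to $W_K \cong I_K \rtimes \mathbb{Z}$, then handle the new continuity issue by showing that $1 \mapsto w^{-1}$ is extendable via Lemma~\ref{G_extendability}(2) after passing to a power lying in $T(\mathcal{O}_L)$. The only minor difference is that you dispatch the extendability of $1 \mapsto w^{-m}$ by invoking profiniteness of $T(\mathcal{O}_L)$ directly, whereas the paper writes $w^{mm'}$ in coordinates (after splitting $T$ over some $\mathcal{O}_{L'}$) to verify $\lim_{k\to\infty}(w^{mm'})^{p^k}=1$ and then applies Lemma~\ref{G_extendability}(1); both arguments are valid and yield the same conclusion.
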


\begin{proof}
(1) Assume that $v \in \ker (\zeta(w) \otimes 1 - 1 \otimes \Phi_K \colon M_{T, \crys} \to M_{T, \crys})$. By \cite[Lemma 4]{Lin20}, the representation $v$ extends to a representation $v' \colon W_K \cong I_K \rtimes \mathbb{Z} \to G(\mathcal{O}_L)$. So it suffices to show that $v'$ extends to a continuous representation of the absolute Galois group $\Gal_K$, which is homeomorphic to the profinite set $I_K \times \widehat{\mathbb{Z}}$. 

Since the centralizer $C_G (T)$ is the maximal torus $T$ and the Weyl group $W(G, T)$ is finite, there exists a positive integer $m$ such that $w^m \in T(\mathcal{O}_L)$. For a finite extension $L'/L$, the torus $T$ splits over $\mathcal{O}_{L'}$ and the topology of the group $T(\mathcal{O}_L)$ is the restriction of that of $T(\mathcal{O}_{L'})$. Fix an isomorphism $i \colon T \cong \mathbb{G}_{m, \mathcal{O}_L}^d$ over $\mathcal{O}_{L'}$. Then there is also a positive integer $m'$ such that the $j$-th component of $i(w^{mm'}) \in (\mathcal{O}_{L'}^{\ast})^d$ is in $1 + \mathfrak{m}_{L'}$ for every $1 \leq j \leq d$. Then $\lim_{k \to \infty} (w^{mm'})^{p^k} = 1$ in $T(\mathcal{O}_{L'})$. The limit also exists and equals to $1$ since $T(\mathcal{O}_L)$ is a closed subset of $T(\mathcal{O}_{L'})$. So $1 \mapsto w^{mm'}$ and then $1 \mapsto w$ is extendable by Lemma \ref{G_extendability}. 
The ``only if'' part is obvious. 

(2) See \cite[Lemma 4 (2)]{Lin20}
%The proof of (2) is omitted because it is similar to that of (1). 
\end{proof}

We recall the notion of co-labeled Hodge-Tate characters following \cite[\S 6, Definition 3]{Lin20}. 

\begin{definition}
\label{G_Hodge_Tate}
%Fix an embedding $L \hookrightarrow \mathbb{C}_p$. 
Let $G$ be a reductive group over $L$. 
\begin{enumerate}
%\item A continuous $G$-valued representation $\rho \colon \Gal_K \to G(L)$ is \emph{Hodge-Tate} (resp.\ \emph{crystalline}, \emph{potentially crystalline}) if for any algebraic representation $\tau \colon G \to \GL (V)$, 
%and $\sigma \in \Sigma_L \coloneqq \Hom_{\mathbb{Q}_p} (L, \mathbb{C}_p)$, 
%the composite $\tau \circ \rho$ is a Hodge-Tate (resp.\ crystalline, potentially crystalline) representation.  
\item For a Hodge-Tate $G$-valued representation $\rho \colon \Gal_K \to G(L)$, there is a cocharacter $\mathcal{H}\mathcal{T}(\rho)^{\sigma} \colon \mathbb{G}_{m, \mathbb{C}_p} \to G_{\mathbb{C}_p}$ associated with $\rho$ for each $\sigma \in \Sigma_L = \Hom_{\mathbb{Q}_p} (L, \mathbb{C}_p)$ by Tannakian theory (cf.\ \cite[Definition 3]{Lin20}). 
The \emph{co-labeled Hodge-Tate character} associated with $\rho$ is defined by
\[
\mathcal{H}\mathcal{T}(\rho) \coloneqq (\mathcal{H}\mathcal{T}(\rho)^{\sigma})_{\sigma \in \Sigma_L} \in \prod_{\sigma \in \Sigma_L} X_{\ast} (G_{\mathbb{C}_p}).
\]
%We call $\mathcal{H}\mathcal{T}(\rho)$ the \emph{colabelled Hodge-Tate cocharacter of $\rho$}. 
\item A cocharacter of $G_{\mathbb{C}_p}$ is said to be \emph{regular} if it is not killed by any roots of $G_{\mathbb{C}_p}$ (with respect to a fixed maximal torus of $G_{\mathbb{C}_p}$). 
For a Hodge-Tate $G$-valued representation $\rho \colon \Gal_K \to G(L)$, we say that $\rho$ has \emph{regular} Hodge-Tate weights if for each $\sigma \in \Sigma_L$, the cocharacter $\mathcal{H}\mathcal{T}(\rho)^{\sigma}$ of $G_{\mathbb{C}_p}$ is regular. 
\end{enumerate}
\end{definition}

\section{Crystalline lifts of quasi-semisimple representations with fixed abelianizations}

We prove the first main theorem in this section. In addition to Lin's argument \cite[Proposition 2, Theorem 6]{Lin20}, we use the calculation of the norms of Galois characters due to B\"ockle-Iyengar-Pa\v{s}k\={u}nas \cite[Lemma 2.1]{BIP22} in order to construct a crystalline lift with a fixed abelianization. 

We call the homomorphism $K^{\ast} \to \mathcal{O}_K^{\ast}$ defined by $\varpi_K \mapsto 1, \ \mathcal{O}_K^{\ast} \ni a \mapsto a$ the \emph{fundamental character (of $K$)}, and the composite of the fundamental character with the reduction map $\mathcal{O}_K^{\ast} \twoheadrightarrow k^{\ast}$ the \emph{mod $\varpi_K$ fundamental character (of $K$)}. 
Write $\omega_K \colon K^{\ast} \to \mathcal{O}_K^{\ast}$ for the fundamental character of $K$. 
Note that the fundamental character is the continuous character of $K^{\ast}$ associated with the Lubin-Tate character, which is a crystalline character $\Gal_K \to \mathcal{O}_K^{\ast}$. 
Let $\mathrm{unr}(c) \colon K^{\ast} \to \mathcal{O}_L^{\ast}$ be the unramified character which sends the uniformizer $\varpi_K$ to an element $c \in \mathcal{O}_L^{\ast}$. 
We recall basic properties of crystalline characters of $\Gal_K$. 

\begin{lemma}
\label{crystalline_character}
The continuous character $K^{\ast} \to \mathcal{O}_L^{\ast}$ associated with a crystalline character $\Gal_K \to \mathcal{O}_L^{\ast}$ can be written as the twist of $\mathrm{unr}(c)$ for an element $c \in \mathcal{O}_L^{\ast}$ with 
%a product of $\Sigma_L \coloneqq \Hom_{\mathbb{Q}_p} (K, L)$-twists of the fundamental characters 
a homomorphism $x \mapsto \prod_{\sigma \colon K \to L} \sigma (\omega_K (x))^{a_{\sigma}}$ where $a_{\sigma} \in \mathbb{Z}$ for each $\sigma \in \Hom_{\mathbb{Q}_p} (K, L)$. 
\end{lemma}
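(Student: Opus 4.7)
The plan is to decompose a crystalline character $\chi \colon \Gal_K \to \mathcal{O}_L^\ast$ as an unramified character times a product of twists of the Lubin-Tate character according to its co-labeled Hodge-Tate weights, and then translate through $\rec_K$.

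First, I would recall the Lubin-Tate character $\chi_{LT} \colon \Gal_K \to \mathcal{O}_K^\ast$ associated with $\varpi_K$. It is crystalline, and under our normalization of $\rec_K$ it corresponds to the fundamental character $\omega_K$ via local class field theory. For each $\sigma \in \Hom_{\mathbb{Q}_p}(K, L)$, post-composition yields a crystalline character $\chi_\sigma \coloneqq \sigma \circ \chi_{LT} \colon \Gal_K \to \mathcal{O}_L^\ast$ whose co-labeled Hodge-Tate character has a $1$ in the $\sigma$-slot and zero in every other slot; via $\rec_K$ it corresponds to $x \mapsto \sigma(\omega_K(x))$.

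Given the crystalline character $\chi$ of the lemma with co-labeled Hodge-Tate character $(a_\sigma)_\sigma$, I would next form
\[
\chi' \coloneqq \chi \cdot \prod_{\sigma \in \Hom_{\mathbb{Q}_p}(K, L)} \chi_\sigma^{-a_\sigma}.
\]
By additivity of Hodge-Tate weights under tensor products, $\chi'$ is crystalline with all Hodge-Tate weights zero. Invoking the classical fact that such a character is unramified (its associated weakly admissible filtered $\varphi$-module has trivial filtration, so the representation is recovered as $(B_\crys \otimes D)^{\varphi = 1}$, on which $I_K$ acts trivially), one obtains $\chi' = \mathrm{unr}(c)$ for a unique $c \in \mathcal{O}_L^\ast$, with $c$ integral because $\chi$ and each $\chi_\sigma$ take values in $\mathcal{O}_L^\ast$. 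Rearranging gives $\chi = \mathrm{unr}(c) \cdot \prod_\sigma \chi_\sigma^{a_\sigma}$; applying $\rec_K$ translates this into the claimed factorization of the associated character of $K^\ast$.

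The main obstacle is the last step, the classical reduction of zero-weight crystalline characters to unramified ones, which genuinely invokes Fontaine's equivalence between crystalline representations and weakly admissible filtered $\varphi$-modules. The remainder is Tannakian bookkeeping about how post-composition with embeddings acts on co-labeled Hodge-Tate characters; in practice one can either sketch the reduction in a line via $D_\crys$ or cite a standard reference on $p$-adic Hodge theory.
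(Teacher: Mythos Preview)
Your argument is correct and is the standard route to this lemma: strip off the algebraic part of the character using embeddings of the Lubin--Tate character, observe that what remains is crystalline with all Hodge--Tate weights zero, and conclude it is unramified. The paper does not give a proof at all; it simply cites \cite[Proposition B.4]{Con11}, which proves exactly this statement by essentially the same method you outline. So there is nothing to compare beyond noting that you have unpacked the cited reference.

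One small remark on presentation: your parenthetical justification that a crystalline character with all Hodge--Tate weights zero is unramified (``recovered as $(B_{\crys}\otimes D)^{\varphi=1}$'') is slightly glib, since the recovery functor also involves $\Fil^0$ and one must say why inertia acts trivially there. In practice this is immediate from the fact that for a one-dimensional crystalline representation with trivial filtration the associated filtered $\varphi$-module is that of an unramified character, or alternatively from Sen's theorem plus crystallinity; either way a one-line citation (e.g.\ to Conrad or to Fontaine's original papers) would suffice, as you note.
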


\begin{proof}
See \cite[Proposition B.4]{Con11}. 
\end{proof}

\begin{lemma}
\label{fundamental_crys_lift}
Let $f := [k_L:k]$ and $K_f$ be an unramified extension of $K$ of degree $f$. Let $G$ be a connected split reductive group over $K$ and $T \subset G$ be a $K$-split maximal torus. 
Let $M_{T, \crys}^0 \subset M_{T, \crys}$ be the $\mathbb{Z}[W(G, T)] \otimes \mathbb{Z}[\Gal_K / I_K]$-submodule which consists of continuous $T(\mathcal{O}_L)$-valued representations of $I_K$ that can be extended to a continuous representation $\Gal_{K_f} \to T(\mathcal{O}_L)$. 
Then the natural reduction map $M_{T, \crys}^0 \to M_{T, k_L}$ is surjective. 
\end{lemma}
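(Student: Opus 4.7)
The plan is to first reduce to the rank-one case $T = \mathbb{G}_m$ using that $T$ is split, and then explicitly construct the lift as a suitable power of the Lubin–Tate character of $K_f$. The key point making the problem easy is that the residue field of $K_f$ is exactly $k_L$, so the Lubin–Tate character of $K_f$ reduces modulo $\varpi_L$ to the natural residue map $\mathcal{O}_{K_f}^\ast \twoheadrightarrow k_L^\ast$.

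Since $T$ is $K$-split, fixing an isomorphism $T \cong \mathbb{G}_m^d$ gives $T(\mathcal{O}_L) \cong (\mathcal{O}_L^\ast)^d$ and $T(k_L) \cong (k_L^\ast)^d$, so continuous characters into $T$ correspond to $d$-tuples of characters into $\mathbb{G}_m$. Composing with the projections $T \twoheadrightarrow \mathbb{G}_m$, and using that every algebraic $L$-representation of $\mathbb{G}_m^d$ is a direct sum of products of the tautological characters, one sees that a representation $\Gal_{K_f} \to T(\mathcal{O}_L)$ is crystalline if and only if each of its components $\Gal_{K_f} \to \mathcal{O}_L^\ast$ is. Therefore $M_{T,\crys}^0 \to M_{T,k_L}$ is the $d$-fold product of $M_{\mathbb{G}_m, \crys}^0 \to M_{\mathbb{G}_m, k_L}$, and it suffices to treat $T = \mathbb{G}_m$.

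Given $\overline{v} \colon I_K \to k_L^\ast$, note that $I_K = I_{K_f}$ since $K_f/K$ is unramified. I would apply local class field theory for $K_f$ to identify $\overline{v}$ with a continuous character $\overline{v}' \colon \mathcal{O}_{K_f}^\ast \to k_L^\ast$. Because $1 + \mathfrak{m}_{K_f}$ is pro-$p$ while $k_L^\ast$ has order prime to $p$, the character $\overline{v}'$ factors through the residue map $\mathcal{O}_{K_f}^\ast \twoheadrightarrow k_L^\ast$; since $k_L^\ast$ is cyclic, the factored character has the form $x \mapsto x^a$ for some integer $a$.

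To exhibit the lift, let $\sigma_0 \colon K_f \hookrightarrow L$ be the natural inclusion (available because $L$ contains $K_f$ inside $\overline{\mathbb{Q}}_p$). By Lemma \ref{crystalline_character}, the continuous character
\[
\chi' \colon K_f^\ast \to \mathcal{O}_L^\ast, \qquad x \mapsto \sigma_0(\omega_{K_f}(x))^a,
\]
arises via LCFT from a crystalline character $\chi \colon \Gal_{K_f} \to \mathcal{O}_L^\ast$. On $\mathcal{O}_{K_f}^\ast$ we have $\chi'(u) = \sigma_0(u)^a$, whose reduction modulo $\varpi_L$ is $\overline{u} \mapsto \overline{u}^a$ since $\sigma_0$ induces the identity on residue fields. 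Via LCFT for $K_f$ this matches $\overline{v}'$, so $\chi|_{I_K}$ lies in $M_{\mathbb{G}_m, \crys}^0$ and reduces to $\overline{v}$. There is no deep obstacle here: once LCFT is applied with respect to $K_f$ rather than $K$ and the residue-field equality $k_{K_f} = k_L$ is noted, the construction is essentially a one-line application of Lemma \ref{crystalline_character}.
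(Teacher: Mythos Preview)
Your argument is correct and follows essentially the same route as the paper: both reduce to the observation that every element of $M_{T,k_L}$ is built from powers of the mod $\varpi_{K_f}$ fundamental character, and then lift via the Lubin--Tate (fundamental) character of $K_f$, which is crystalline. The paper compresses this into two lines by asserting that $M_{T,k_L}$ is generated by the mod $\varpi_{K_f}$ fundamental character, whereas you spell out the reduction to $T=\mathbb{G}_m$, the factorization through $\mathcal{O}_{K_f}^\ast \twoheadrightarrow k_L^\ast$, and the explicit form $x\mapsto x^a$; but the underlying content is identical.
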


\begin{proof}
The group $M_{T, k_L}$ is generated by the mod $\varpi_{K_f}$ fundamental character. Since it lifts to the fundamental character of $K_f$ which extends to a crystalline character of $\Gal_{K_f}$ (i.e.\ the Lubin-Tate character of $\Gal_{K_f}$), the assertion follows. 
\end{proof}

The following lemma is used to take a crystalline lift which has regular Hodge-Tate weights. 

\begin{lemma}
\label{trivdet_regular}
Let $G$ be a connected split reductive group over $K$ and $T \subset G$ be a $K$-split maximal torus. Consider the map 
\[
w-1 \colon X_{\ast} (T) \to X_{\ast} (T), \ \chi \mapsto \ad(w)(\chi) \cdot \chi^{-1}
\]
for every non-trivial element $w \in W(G, T)$. Let $X_{\ast}^{\ab} \colon X_{\ast} (T) \to X_{\ast} (G^{\ab})$ be the homomorphism of the cocharacter groups induced by the composite $T \hookrightarrow G \twoheadrightarrow G^{\ab}$.  
Then the property 
\[
(R_w): \text{the rank of} \ \ker(X_{\ast}^{\ab}) / (\ker(X_{\ast}^{\ab}) \cap \ker(w-1)) \ \text{is positive}
\]
is satisfied for every non-trivial element $w \in W(G, T)$. 
\end{lemma}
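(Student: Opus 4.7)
The plan is to reduce the statement to the faithful action of the Weyl group on the cocharacter lattice of the derived subgroup.

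First, I would identify $\ker(X_\ast^{\ab})$ explicitly. Write $T_d \coloneqq (T \cap G^{\der})^{\circ}$; this is a maximal torus of the semisimple group $G^{\der}$. Since $G = T \cdot G^{\der}$ (every element of a split reductive group is a product of a torus element and an element of the derived subgroup), the composite $T \hookrightarrow G \twoheadrightarrow G^{\ab}$ is surjective with kernel $T \cap G^{\der}$, whose identity component is $T_d$. As $X_\ast$ only sees the identity component of a diagonalizable group, the induced sequence
\[
0 \to X_\ast(T_d) \to X_\ast(T) \xrightarrow{X_\ast^{\ab}} X_\ast(G^{\ab})
\]
is exact, so $\ker(X_\ast^{\ab}) = X_\ast(T_d)$.

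Next, I would observe that the Weyl group $W(G,T) = N_G(T)/T$ agrees with $W(G^{\der}, T_d) = N_{G^{\der}}(T_d)/T_d$: one has $N_G(T) = T \cdot N_{G^{\der}}(T_d)$, and the conjugation action of $W(G,T)$ on $X_\ast(T)$ clearly preserves the sublattice $X_\ast(T_d)$ corresponding to $G^{\der}$. Thus the restriction of $w - 1$ to $\ker(X_\ast^{\ab}) = X_\ast(T_d)$ is precisely the action of $w - 1$ on the cocharacter lattice of the maximal torus $T_d$ of the semisimple group $G^{\der}$, and the quotient $\ker(X_\ast^{\ab})/(\ker(X_\ast^{\ab}) \cap \ker(w-1))$ has positive rank if and only if $w$ acts non-trivially on $X_\ast(T_d) \otimes \mathbb{Q}$.

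Finally, the main point is the well-known fact that the Weyl group of a semisimple group acts faithfully on the cocharacter space of a maximal torus. Indeed, $X_\ast(T_d) \otimes \mathbb{Q}$ is spanned by the coroots of $G^{\der}$, and $W(G^{\der}, T_d)$ is generated by the reflections along these coroots and acts faithfully on the coroot system. Hence any non-trivial $w$ acts non-trivially on $X_\ast(T_d) \otimes \mathbb{Q}$, which gives property $(R_w)$. The only step that needs any care is the identification $\ker(X_\ast^{\ab}) = X_\ast(T_d)$ together with the compatibility $W(G,T) = W(G^{\der}, T_d)$; once these are set up, faithfulness of the Weyl action on the semisimple cocharacter lattice is immediate.
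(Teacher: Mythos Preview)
Your argument is correct and is essentially the same reduction as the paper's: both proofs boil down to the faithfulness of the Weyl group action on the cocharacter lattice of a maximal torus in the semisimple part. The only difference is the choice of semisimple group used to realise $\ker(X_\ast^{\ab})\otimes\mathbb{Q}$. The paper passes to the quotient $G^{\ad}\coloneqq G/Z^\circ$ and uses the rational splitting $X_\ast(T)\otimes\mathbb{Q}\cong (X_\ast(Z^\circ)\otimes\mathbb{Q})\oplus(X_\ast(T^{\ad})\otimes\mathbb{Q})$ coming from the isogeny $Z^\circ\to G^{\ab}$, then invokes the case of trivial connected centre. You instead pass to the subgroup $G^{\der}$ and identify $\ker(X_\ast^{\ab})=X_\ast(T_d)$ integrally, which is slightly sharper and avoids tensoring with $\mathbb{Q}$ until the very last step; this integral identification is in fact equivalent to the paper's Lemma~\ref{right_inverse_existence} (torsion-freeness of $X^\ast(T)/X^\ast(G^{\ab})$), which guarantees that $T\cap G^{\der}$ is already connected. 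Either way, the substance is the same standard fact that $W(G,T)\cong W(G^{\der},T_d)\cong W(G^{\ad},T^{\ad})$ acts faithfully on the rational cocharacter space of the semisimple torus.
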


\begin{proof}
Let $G^{\ad} \coloneqq G / Z^{\circ}$, which is a split reductive group. 
If $Z^{\circ}$ is trivial, the condition $(R_w)$ is satisfied for every non-trivial element $w \in W(G, T)$ of the Weyl group since $w$ acts non-trivially on $X_{\ast} (T)$. 

For general $G$, there exists a decomposition 
\[
X_{\ast} (T) \otimes_{\mathbb{Z}} \mathbb{Q} \cong (X_{\ast} (Z^{\circ}) \otimes_{\mathbb{Z}} \mathbb{Q}) \oplus (X_{\ast} (G^{\ad}) \otimes_{\mathbb{Z}} \mathbb{Q})
\]
since the composite $Z^{\circ} \hookrightarrow G \twoheadrightarrow G^{\ab}$ is an isogeny. 
Since the center of $G^{\ad}$ is trivial, the lemma follows from the case where $Z^{\circ}$ is trivial. 
%Each non-trivial element $w \in W(G, T)$ of the Weyl group 
\end{proof}

Let $(X^{\ast} (T), R, X_{\ast} (T), R^{\vee})$ be the root datum of $(G, T)$. 
The following result is well-known. 

\begin{lemma}
\label{coroot_pairing}
Let $\alpha \in X^{\ast} (T)$ be an algebraic character of $T$. Then $\alpha$ is the restriction of an algebraic character of $G$ if and only if $\langle \alpha, \beta^{\vee} \rangle = 0$ for any $\beta^{\vee} \in R^{\vee}$. 
\end{lemma}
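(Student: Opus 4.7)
The plan is to reformulate the extension problem through the abelianization $G^{\ab}$, which is a torus (since $G$ is connected reductive). The ``only if'' direction is immediate: any algebraic character $\tilde\alpha \colon G \to \mathbb{G}_m$ factors through $G^{\ab}$ and is therefore trivial on $G^{\der}$; for each root $\beta$, the coroot $\beta^\vee \colon \mathbb{G}_m \to T$ factors through the rank-one semisimple subgroup $\langle U_\beta, U_{-\beta} \rangle \subset G^{\der}$, so the composite $\tilde\alpha \circ \beta^\vee$ is trivial, which exactly means $\langle \alpha, \beta^\vee \rangle = 0$.

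For the converse, I would invoke the standard structural fact that $T^{\der} \coloneqq T \cap G^{\der}$ is a maximal torus of the semisimple group $G^{\der}$, and in particular a connected subtorus of $T$. The composite $T \hookrightarrow G \twoheadrightarrow G^{\ab}$ is then a surjective morphism of tori with kernel exactly $T^{\der}$, so $\alpha \in X^{\ast}(T)$ extends to an algebraic character of $G$ if and only if $\alpha|_{T^{\der}}$ is trivial, equivalently if and only if the $\mathbb{Z}$-linear functional $\langle \alpha, -\rangle \colon X_{\ast}(T^{\der}) \hookrightarrow X_{\ast}(T) \to \mathbb{Z}$ is identically zero.

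To finish, I would use that the coroot lattice $\mathbb{Z} R^\vee$ sits as a finite-index sublattice of $X_{\ast}(T^{\der})$: for a semisimple group, the cocharacter lattice of a maximal torus is sandwiched between the coroot lattice and the coweight lattice, and these differ by the finite fundamental group. Since $X^{\ast}(T)$ is torsion-free, a functional on $X_{\ast}(T^{\der})$ vanishes if and only if its restriction to the finite-index sublattice $\mathbb{Z} R^\vee$ vanishes, and this last condition is exactly $\langle \alpha, \beta^\vee \rangle = 0$ for all $\beta^\vee \in R^\vee$. The only non-trivial inputs are the identification $\ker(T \to G^{\ab}) = T \cap G^{\der}$ as a maximal torus of $G^{\der}$ and the finite-index statement for the coroot lattice; both are standard for split reductive groups, so I anticipate no real obstacle beyond assembling them cleanly.
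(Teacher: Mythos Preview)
Your argument is correct. The paper itself does not give a proof but simply cites \cite[Part II, 1.18 (3)]{Jan07}, so there is nothing to compare against beyond noting that your route---factoring through $G^{\ab}$, identifying the kernel of $T \to G^{\ab}$ with the maximal torus $T \cap G^{\der}$ of $G^{\der}$, and using that $\mathbb{Z}R^{\vee}$ has finite index in its cocharacter lattice---is the standard one. One minor wording point: in the final step, what forces a $\mathbb{Z}$-linear functional to vanish on all of $X_{\ast}(T^{\der})$ once it vanishes on a finite-index sublattice is that the target $\mathbb{Z}$ is torsion-free, not that $X^{\ast}(T)$ is; the intended reasoning is clear regardless.
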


\begin{proof}
See \cite[Part II, 1.18 (3)]{Jan07}. 
%By \cite[Proposition 8.1.8 (iii)]{Spr98}, the subtorus $T_1 \subset T$ generated by the groups $\text{Im} \beta^{\vee}$, $\beta^{\vee} \in R^{\vee}$ is a maximal torus of $G^{\der}$. 
%Note that $T \cap G^{\der} = T_1$ since $T \cap G^{\der} \supset T_1$ by \cite[Proposition 8.1.8 (iii)]{Spr98} and $T \cap G^{\der} \subset C_{G^{\der}} (T_1) = T_1$ by \cite[Corollary 7.6.4]{Spr98}. 
%So the ``only if'' part is obvious. 
%By definition, if an element $\alpha \in X^{\ast} (T)$ satisfies $\langle \alpha, \beta^{\vee} \rangle = 0$ for any $\beta^{\vee} \in R^{\vee}$ if and only if the restriction $\alpha |_{T_1}$ is trivial, which is equivalent to that $\alpha$ is the restriction of an algebraic character of $G$ since the natural map $T/(T \cap G^{\der}) \to G / G^{\der}$ is an isomorphism. 
\end{proof}

\begin{lemma}
\label{right_inverse_existence}
The composite $T \hookrightarrow G \to G^{\ab}$ has a right inverse, that is, there exists a homomorphism $G^{\ab} \to T$ such that the composite $G^{\ab} \to T \to G^{\ab}$ is the identity. 
\end{lemma}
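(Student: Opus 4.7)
The plan is to reduce the statement to a purely lattice-theoretic question about character groups and then use Lemma \ref{coroot_pairing} to finish. Since $G$ is split reductive over $\mathcal{O}_K$, the abelianization $G^{\ab} = G/G^{\der}$ is a split torus, and $T$ is a split maximal torus by assumption. For split tori, morphisms $G^{\ab} \to T$ correspond bijectively to morphisms of character lattices $X^{\ast}(T) \to X^{\ast}(G^{\ab})$ in the opposite direction, and requiring the composite $G^{\ab} \to T \to G^{\ab}$ to be the identity translates into requiring the corresponding composite of character maps $X^{\ast}(G^{\ab}) \hookrightarrow X^{\ast}(T) \to X^{\ast}(G^{\ab})$ to be the identity. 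Thus it suffices to show that the injection of free abelian groups $X^{\ast}(G^{\ab}) \hookrightarrow X^{\ast}(T)$ admits a retraction, equivalently that the cokernel $X^{\ast}(T)/X^{\ast}(G^{\ab})$ is torsion-free.

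Next I would identify $X^{\ast}(G^{\ab})$ inside $X^{\ast}(T)$. Because pulling back along $G \twoheadrightarrow G^{\ab}$ identifies $X^{\ast}(G^{\ab})$ with $X^{\ast}(G)$, Lemma \ref{coroot_pairing} describes this subgroup explicitly:
\[
X^{\ast}(G^{\ab}) = \{\alpha \in X^{\ast}(T) \mid \langle \alpha, \beta^{\vee} \rangle = 0 \text{ for every } \beta^{\vee} \in R^{\vee}\} = \bigcap_{\beta^{\vee} \in R^{\vee}} \ker\bigl(\langle -, \beta^{\vee} \rangle\bigr).
\]
This presents $X^{\ast}(G^{\ab})$ as the common kernel of a finite collection of $\mathbb{Z}$-linear forms on the free abelian group $X^{\ast}(T)$, so the subgroup is automatically saturated: if $\alpha \in X^{\ast}(T)$ and $n \alpha \in X^{\ast}(G^{\ab})$ for some positive integer $n$, then $n \langle \alpha, \beta^{\vee} \rangle = 0$ in $\mathbb{Z}$ for every $\beta^{\vee}$, hence $\langle \alpha, \beta^{\vee} \rangle = 0$ and $\alpha \in X^{\ast}(G^{\ab})$.

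It then follows that $X^{\ast}(T)/X^{\ast}(G^{\ab})$ is finitely generated and torsion-free, hence a free $\mathbb{Z}$-module, so the short exact sequence $0 \to X^{\ast}(G^{\ab}) \to X^{\ast}(T) \to X^{\ast}(T)/X^{\ast}(G^{\ab}) \to 0$ splits. Any splitting provides the desired retraction of character lattices, and dualizing produces a homomorphism $G^{\ab} \to T$ of split tori over $\mathcal{O}_K$ that is a right inverse of $T \hookrightarrow G \twoheadrightarrow G^{\ab}$. There is no real obstacle here beyond the saturation property, which is precisely what Lemma \ref{coroot_pairing} delivers; the only point worth double-checking is that the character-lattice formalism applies over the base $\mathcal{O}_K$, which it does because both $T$ and $G^{\ab}$ are split tori over $\mathcal{O}_K$.
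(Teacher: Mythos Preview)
Your proposal is correct and follows essentially the same approach as the paper: reduce to showing that the image of $X^{\ast}(G^{\ab})$ in $X^{\ast}(T)$ is saturated, identify this image via Lemma~\ref{coroot_pairing} as the common kernel of the pairings $\langle -,\beta^{\vee}\rangle$, and check saturation directly from $\mathbb{Z}$ being torsion-free. The only differences are cosmetic---you spell out the splitting of the short exact sequence and the applicability of the split-torus dictionary over $\mathcal{O}_K$, while the paper leaves these implicit.
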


\begin{proof}
Since the composite $T \hookrightarrow G \to G^{\ab}$ is surjective, the induced map $X^{\ast} (G^{\ab}) \to X^{\ast} (T)$ is injective. 
Let $M \coloneqq \text{im}(X^{\ast} (G^{\ab}) \to X^{\ast} (T))$. It suffices to show that the quotient $X^{\ast} (T) / M$ is torsion-free. 

The set $M$ consists of elements $\alpha \in X^{\ast} (T)$ such that $\langle \alpha, \beta^{\vee} \rangle = 0$ for any $\beta^{\vee} \in R^{\vee}$ by Lemma \ref{coroot_pairing}. 

Let $\alpha \in X^{\ast} (T)$ which satisfies $n\alpha \in M$ for an integer $n$. 
Then we have $\langle n\alpha, \beta^{\vee} \rangle = 0$ for any $\beta^{\vee} \in R^{\vee}$. 
Since $\langle n\alpha, \beta^{\vee} \rangle = n \langle \alpha, \beta^{\vee} \rangle$, we have $\langle \alpha, \beta^{\vee} \rangle = 0$ for any $\beta^{\vee} \in R^{\vee}$. 
Hence $\alpha \in M$. This shows that $X^{\ast} (T) / M$ is torsion-free, and the assertion follows. 
\end{proof}

%We assume that the composite $T \hookrightarrow G \to G^{\ab}$ has a right inverse, that is, there exists a homomorphism $G^{\ab} \to T$ such that the composite $G^{\ab} \to T \to G^{\ab}$ is the identity. 
%Note that $G^{\ab} \cong Z^{\circ} / (Z^{\circ} \cap G^{\der})$, where $Z$ is the center of $G$, is a torus. 

We prove the first main theorem of this paper. 

\begin{theorem}
\label{qss_cryslift_fixeddet}
Let $G$ be a connected split reductive group over $\mathcal{O}_L$ and $T$ be a split maximal torus of $G$ over $\mathcal{O}_L$. 
%Assume that the composite $T \hookrightarrow G \twoheadrightarrow G^{\ab}$ has a right inverse. 

Let $\overline{\rho} \colon \Gal_K \to G(k_L)$ be a quasi-semisimple $G$-valued representation such that $\overline{\rho} (I_K) \subset T(k_L)$ and $\overline{\rho} (\Gal_K) \subset N_G (T)(k_L)$. Let $\overline{\rho}^{\ab} \colon \Gal_K \to G^{\ab} (k_L)$ be the abelianization of $\overline{\rho}$ and $\psi \colon \Gal_K \to G^{\ab} (\mathcal{O}_L)$ be a crystalline lift of $\overline{\rho}^{\ab}$. Then there exist a finite extension $L'/L$ and a crystalline lift $\rho \colon \Gal_K \to G(\mathcal{O}_{L'})$ with regular Hodge-Tate weights such that ${\rho}^{\ab} = \psi$. 
%of the ring of integers $\mathcal{O}_{L'}$ of $L'$. 
%There also exists a crystalline lift ${\rho}^{\reg}$ with regular Hodge-Tate weights such that ${\rho}^{\reg, \det} = \chi^{q^M} \psi^f$ for a crystalline representation $\chi \colon \Gal_K \to G^{\ab} (\mathcal{O}_L)$ and a sufficiently large positive integer $M$. 

Moreover, if we do not impose a crystalline lift $\rho$ has regular Hodge-Tate weights, $L'/L$ can be taken to be a finite unramified extension. 
\end{theorem}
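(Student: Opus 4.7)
The plan is to realize $\rho$ via Lemma \ref{extendability_NGT} by lifting the data $(\overline{w}, \overline{v})$ encoding $\overline{\rho}$ — where $\overline{v} := \overline{\rho}|_{I_K} \in M_{T, k_L}$ and $\overline{\rho}(\Phi_K) = \overline{w}^{-1}$ — to a pair $(w, v) \in N_G(T)(\mathcal{O}_{L'}) \times M_{T, \crys}^0$ satisfying the $w$-compatibility $\zeta(w) v = \Phi_K v$ and the abelianization constraints $\bar{\pi}(w^{-1}) = \psi(\Phi_K)$ and $\bar{\pi}_* v = \psi|_{I_K}$, where $\bar{\pi} \colon N_G(T) \to G^{\ab}$ is the natural composition. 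Fixing $w$ is immediate: pick any lift $w_0 \in N_G(T)(\mathcal{O}_{L'})$ of $\overline{w}$, and correct by a torus element using the section $i \colon G^{\ab} \to T$ of Lemma \ref{right_inverse_existence}, setting $w := w_0 \cdot i\bigl(\bar{\pi}(w_0)^{-1} \psi(\Phi_K)^{-1}\bigr)$. Since $\bar{\pi}(w_0) \equiv \psi(\Phi_K) \pmod{\varpi_{L'}}$, the correction reduces to the identity, so $w$ still lifts $\overline{w}$ and by construction $\bar{\pi}(w^{-1}) = \psi(\Phi_K)$.

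The construction of $v$ is the core of the argument. I would parametrize $v = u \cdot i(\psi|_{I_K})$ with $u$ taking values in $T' := T \cap G^{\der} = \ker(\bar{\pi}|_T)$, so that $\bar{\pi}_* v = \psi|_{I_K}$ automatically, and $u$ is forced to lift $\overline{u} := \overline{v} \cdot i(\overline{\rho}^{\ab}|_{I_K})^{-1} \in M_{T', k_L}$. Using that $i \circ \psi$ extends to $\Gal_K$ (hence $\Phi_K$ acts trivially on $i(\psi|_{I_K})$), and that $W$ acts trivially on $G^{\ab}$, the condition $\zeta(w) v = \Phi_K v$ becomes the inhomogeneous equation
\[
(\zeta(w) - \Phi_K)\, u \;=\; (1 - \zeta(w))\, i(\psi|_{I_K}) \;=:\; \delta \;\in\; M_{T', \crys}^0,
\]
whose reduction modulo $\varpi_{L'}$ is automatically satisfied by $\overline{u}$ because $\overline{\rho}$ is a bona fide representation of $\Gal_K$. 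I would then adapt Lin's construction in \cite[Proposition 2]{Lin20}, which solves the homogeneous version via a Weyl-orbit-by-Weyl-orbit analysis built on Lemma \ref{fundamental_crys_lift}, to produce a lift $u \in M_{T', \crys}^0$ of $\overline{u}$ solving the inhomogeneous equation. At this stage $L'/L$ can be chosen unramified, which will give the ``moreover'' part of the statement.

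For regular Hodge-Tate weights, I would further twist $v$ by a crystalline character $\chi \in M_{T, \crys}^0$ with trivial mod-$\varpi_{L'}$ reduction, trivial image in $M_{G^{\ab}, \crys}$, and satisfying the $w$-compatibility, which preserves all previous conditions while shifting the Hodge-Tate cocharacters $\lambda_\sigma \in X_\ast(T)$ of $v$ by elements of a sublattice of $\ker(X_{\ast}^{\ab})$. Lemma \ref{trivdet_regular} guarantees that $\ker(X_{\ast}^{\ab})$ is not contained in $\ker(w-1)$, ensuring enough freedom in the allowed shifts to move each $\lambda_\sigma$ outside every root hyperplane, at the cost of passing from the unramified $L'$ of the previous step to a possibly ramified finite extension. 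The main obstacle is the inhomogeneous lifting of $\delta$: showing that this specific element lies in the image of $\zeta(w) - \Phi_K$ acting on $M_{T', \crys}^0$ with a preimage reducing to $\overline{u}$. This requires refining Lin's orbit-by-orbit crystalline character construction in the presence of a non-zero, but $W$-invariantly controlled, right-hand side, and combining it with the local-class-field-theoretic calculation of B\"ockle-Iyengar-Pa\v{s}k\={u}nas \cite[Lemma 2.1]{BIP22} that governs the abelianization.
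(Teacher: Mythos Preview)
Your overall framework is correct: encode $\overline{\rho}$ as a pair $(\overline{w},\overline{v})$, lift $w$ using the section $i$ of Lemma~\ref{right_inverse_existence}, lift $v$ inside $M_{T,\tau,\crys}$ (with $\tau=\zeta(w)$) subject to the reduction and abelianization constraints, and invoke Lemma~\ref{extendability_NGT}. The regularity step via twisting by characters attached to cocharacters in $\ker(X_\ast^{\ab})$ is also in the right spirit.

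The gap is in the construction of $v$. Your decomposition $v = u \cdot i(\psi|_{I_K})$ does not simplify the problem: since $\Phi_K$ fixes $i(\psi|_{I_K})$, one has $\delta = -(\tau-\Phi_K)\,i(\psi|_{I_K})$ in $M_{T,\crys}$, so a particular solution $u\in M_{T',\crys}$ to your inhomogeneous equation exists \emph{iff} there is $y\in M_{T,\tau,\crys}$ with $y^{\ab}=\psi|_{I_K}$ (namely $y=u\cdot i(\psi|_{I_K})$). That is exactly the original problem, so the reformulation is circular. Moreover, Lin's Proposition~2 is not an ``orbit-by-orbit'' analysis but an averaging operator $\Xi=\sum_{i=0}^{f-1}\tau^i\otimes\Phi_K^{\,f-1-i}$, which produces elements of the \emph{kernel} $M_{T,\tau,\crys}^0$ from arbitrary elements of $M_{T,\crys}^0$; it says nothing about the image of $\tau-\Phi_K$ on $M_{T',\crys}$, so there is no evident ``adaptation'' that handles your inhomogeneous right-hand side.

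The paper avoids the inhomogeneous equation altogether by imposing the abelianization constraint \emph{before} averaging rather than after. Using the full splitting $T\cong G^{\ab}\times T'$ from Lemma~\ref{right_inverse_existence}, one first de-averages $\overline{\rho}|_{I_K}$ to $\overline{\rho}'\in M_{T,k_L}$ with $\Xi\overline{\rho}'=\overline{\rho}|_{I_K}$, then lifts $\overline{\rho}'$ componentwise to $\rho'\in M_{T,\crys}^0$: the $T'$-components are arbitrary crystalline lifts (Lemma~\ref{fundamental_crys_lift}), while each $G^{\ab}$-component is chosen via \cite[Lemma~2.5]{BIP22} (not Lemma~2.1) as a crystalline character of $I_{K_f}$ whose restriction to $\mathcal{O}_K^\ast$ agrees with the corresponding component of~$\psi$. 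Setting $v=\Xi\rho'$, the fact that $\tau$ acts trivially on $G^{\ab}$ turns $(\Xi\rho')^{\ab}$ into a Frobenius-trace, and the norm computation in the style of \cite[Lemma~2.1]{BIP22} then identifies the extended representation's abelianization with $\psi|_{\Gal_{K_f}}$. This is the missing idea: the abelianization constraint, intractable as a condition on $v\in M_{T,\tau,\crys}$, becomes a tractable norm condition on the pre-image $\rho'$, and \cite[Lemma~2.5]{BIP22} is precisely the tool that solves it.
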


\begin{proof}
\textbf{Step 1.\ }

First, we introduce notations in \cite[Proposition 2]{Lin20} needed in the proof of the theorem. 
%After replacing $L$ with its finite extension, we may assume that $T$ splits. 
%Write $W(G, T) \coloneqq W(G, T)(k_L) \subset W(G, T)(\overline{L})$. 
%We follows the notation and the proof of \cite[Proposition 2]{Lin20}. 
We naturally consider $M_{T, \crys}$ and $M_{T, k_L}$ as $\mathbb{Z}[W(G, T)] \otimes \mathbb{Z}[\Gal_K / I_K]$-modules. For an element $\tau \in W(G, T)$, write 
\begin{align*}
M_{T, \tau, \crys} &\coloneqq \ker(M_{T, \crys} \xrightarrow{\tau \otimes 1 - 1 \otimes \Phi_K} M_{T, \crys}), \\ 
M_{T, \tau, k_L} &\coloneqq \ker(M_{T, k_L} \xrightarrow{\tau \otimes 1 - 1 \otimes \Phi_K} M_{T, k_L})
\end{align*}
for subgroups of $M_{T, \crys}$ and $M_{T, k_L}$. 

%Since the representation $\overline{\rho}$ is quasi-semisimple, we may assume that $\overline{\rho}|_{I_K} \in M_{T, k_L}$, after replacing $L$ with its finite extension again.
% by Theorem \ref{reducible_quasi_semisimple} and Lemma \ref[(2)]{extendability_NGT}.
From here, let $\tau \in W(G, T)$ be the image of $\overline{\rho}(\Phi_K^{-1})$ by the quotient map $\zeta \colon N_G (T)(k_L) \to W(G, T)$. Then we have $\overline{\rho}|_{I_K} \in M_{T, \tau, k_L}$. 

Let $f \coloneqq [k_L : k]$ and $K_f$ be an unramified extension of $K$ of degree $f$. We may assume $\tau^f = 1$ after replacing $L$ with its finite unramified extension. We put 
\[
\Xi \coloneqq \sum_{i = 0}^{f - 1} \tau^i \otimes \Phi_K^{f - 1 - i}. 
\]
Let $M_{T, \crys}^0 \subset M_{T, \crys}$ be the $\mathbb{Z}[W(G, T)] \otimes \mathbb{Z}[\Gal_K / I_K]$-submodule which consists of continuous $T(\mathcal{O}_L)$-valued representations of $I_K$ that can be extended to a continuous representation $\Gal_{K_f} \to T(\mathcal{O}_L)$. We put $M_{T, \tau, \crys}^0 \coloneqq M_{T, \crys}^0 \cap M_{T, \tau, \crys}$. 
Note that the natural reduction map $M_{T, \crys}^0 \to M_{T, k_L}$ is surjective by Lemma \ref{fundamental_crys_lift}. 

Since $(\tau \otimes 1)^f = (1 \otimes \Phi_K)^f = \text{id}$ on both $M_{T, \crys}$ and $M_{T, k_L}$, we have an injection $\Xi M_{T, \crys}^0 \hookrightarrow M_{T, \tau, \crys}^0$. 
Then it follows that the composite
\[
\Xi M_{T, \crys}^0 \hookrightarrow M_{T, \tau, \crys}^0 \hookrightarrow M_{T, \tau, \crys} \to M_{T, \tau, k_L}
\]
is the same as the composite of the natural surjection $\Xi M_{T, \crys}^0 \to \Xi M_{T, k_L}$ and the natural isomorphism 
\[
\Xi M_{T, k_L} \cong M_{T, \tau, k_L}
\]
in the proof of \cite[Proposition 2]{Lin20}, and thus the composite $\Xi M_{T, \crys}^0 \to M_{T, \tau, k_L}$ is surjective. 

\vspace{3mm}

\leftline{\textbf{Step 2.\ }}
Next, we prove the existence of crystalline lifts with fixed abelianizations. 
%by Lin's argument. 
Fix isomorphisms $\mathbb{G}_m^{d_0} \times \mathbb{G}_m^{d_1} \cong G^{\ab} \times T' \cong T$ for a split torus $T'$ by Lemma \ref{right_inverse_existence}. 

For $A = k_L$ or $\mathcal{O}_L$ and a continuous representation $r \colon H \to T(A)$ of a locally profinite group $H$, we write $r_i \colon H \to \mathbb{G}_m (A) = A^{\ast}$ for the $i$-th component of $r$ via the isomorphism $\mathbb{G}_m^{d_0} \times \mathbb{G}_m^{d_1} \cong T$ for each $i \in [1, d_0 + d_1]$. 

Let $\overline{\rho}' \in M_{T, k_L}$ be a continuous representation such that $\Xi \overline{\rho}'$ is mapped to $\overline{\rho} |_{I_K}$ by the natural isomorphism $\Xi M_{T, k_L} \cong M_{T, \tau, k_L}$ in Step 1. 
%Set $\overline{\rho}' \coloneqq \Xi^{-1} \overline{\rho}$. 
We construct an element $\rho' \in M_{T, \crys}^0$ as follows:

\begin{itemize}
\item For $d_0 + 1 \leq i \leq d_0 + d_1$, take $\rho'_i \in M_{\mathbb{G}_m, \crys}^0$ to be mapped to $\overline{\rho}'_i$ by the surjection $M_{\mathbb{G}_m, \crys}^0 \to M_{\mathbb{G}_m, k_L}$ (see Lemma \ref{fundamental_crys_lift}). 

\item For $1 \leq i \leq d_0$, let $\psi_i \colon \Gal_K \to \mathcal{O}_L^{\ast}$ be the $i$-th component of $\psi$. The character $\psi'_i \colon K^{\ast} \to \mathcal{O}_L^{\ast}$ associated with $\psi_i$ may be written as the twist of an unramified character $\mathrm{unr}(c_i)$ with $x \mapsto \prod_{\sigma \colon K \to L} \sigma (\omega_K (x))^{a_{\sigma, i}}$, where $c_i \in \mathcal{O}_L^{\ast}$, $a_{\sigma, i} \in \mathbb{Z}$ for each $\sigma \in \Hom_{\mathbb{Q}_p} (K, L)$ by Lemma \ref{crystalline_character}. 
%Take an element $\widetilde{\sigma} \in \Hom_{\mathbb{Q}_p} (K_f, L)$ which extends $\sigma$ for each $\sigma \in \Hom_{\mathbb{Q}_p} (K, L)$. 
Then there exist distinct integers $b_{s, i}$ $(s \in \Hom_{\mathbb{Q}_p} (K_f, L))$ such that the character 
\[
\theta \colon \mathcal{O}_{K_f}^{\ast} \to \mathcal{O}_L^{\ast}, \quad x \mapsto \prod_{s \in \Hom_{\mathbb{Q}_p} (K_f, L)} s(x)^{b_{s, i}}
\]
satisfies the following conditions by \cite[Lemma 2.5]{BIP22}. 
\begin{itemize}
\item The integers $b_{s, i}$ $(s \in \Hom_{\mathbb{Q}_p} (K_f, L))$ are pairwise distinct. 
\item The character $\theta$ lifts the character $\mathcal{O}_{K_f}^{\ast} \to k_L^{\ast}$ associated with the composite of $I_{K_f} \xrightarrow{=} I_K$ and $\overline{\rho}'_i$. 
\item $\theta(x) = \psi'_i (x)$ for all $x \in \mathcal{O}_K^{\ast}$. 
\end{itemize}

Let $\rho'_i \colon I_K \to \mathcal{O}_L^{\ast}$ 
%be the extension to $\Gal_{K_f}$ (which exists by Lemma \ref{G_extendability}) of 
be the composite of the following maps
\[
%I_K = I_{K_f} \hookrightarrow W_{K_f} \twoheadrightarrow W_{K_f}^{\ab} \xrightarrow[\cong]{\rec_{K_f}^{-1}} K_f^{\ast} \xrightarrow{\prod_{\tau \colon K_f \to L} \tau(-)^{b_{\tau, i}}} L^{\ast},  
I_K = I_{K_f} \hookrightarrow W_{K_f} \twoheadrightarrow W_{K_f}^{\ab} \xrightarrow[\cong]{\rec_{K_f}^{-1}} K_f^{\ast} \xrightarrow{\prod_{s \colon K_f \to L} s(\omega_{K_f} (-))^{b_{s, i}}} L^{\ast},  
\]
%where $\tau \in \Hom_{\mathbb{Q}_p} (K_f, L)$ and $b_{\tau, i} = a_{\sigma, i}$ if $\tau = \widetilde{\sigma}$ for some $\sigma \in \Hom_{\mathbb{Q}_p} (K, L)$, otherwise $b_{\tau, i} = 0$. 
%We set $\rho'_i \coloneqq \widetilde{\rho}'_i |_{I_K}$. 
%it can be constructed by twisting powers of the Galois conjugates of the Lubin-Tate characters, which are the composite of the unramified character $\text{unr}(c)$ and 
\end{itemize}

Then the continuous $T(\mathcal{O}_L)$-valued representation $\rho' \in M_{T, \crys}^0$ constructed as above satisfies the following properties:
\begin{itemize}
\item The image of $\rho'$ by the surjection $M_{T, \crys}^0 \to M_{T, k_L}$
%by the map $M_{T, \crys} \to M_{T, k_L}$ 
is $\overline{\rho}'$. 
%and $\rho' \in M_{T, \crys}^0$ such that the image of $\rho'$ by the map $M_{T, \crys} \to M_{T, k_L}$ is $\overline{\rho}'$
\item The abelianization ${\rho'}^{\ab} \colon I_K \to G^{\ab} (\mathcal{O}_L)$ is extendable to a crystalline representation $\psi' \colon \Gal_{K_f} \to G^{\ab} (\mathcal{O}_L)$ such that $\psi |_{W_K}$ is the same as the composite 
\[
W_K \twoheadrightarrow W_K^{\ab} \xrightarrow[\cong]{\rec_K^{-1}} K^{\ast} \hookrightarrow K_f^{\ast} \xrightarrow[\cong]{\rec_{K_f}} W_{K_f}^{\ab} \xrightarrow{\psi' |_{W_{K_f}}} G^{\ab} (\mathcal{O}_L).
\]
\end{itemize} 

%In fact, the continuous character $K^{\ast} \to \mathcal{O}_L^{\ast}$ associated with a crystalline character $\Gal_K \to \mathcal{O}_L^{\ast}$ can be written by the twist of an unramified character $\text{unr}(c)$, which sends the uniformizer $\varpi_K$ to an element $c \in \mathcal{O}_L^{\ast}$, with $x \mapsto \prod_{\sigma \colon K \to L} \sigma (x)^{a_{\sigma}}$ where $a_{\sigma} \in \mathbb{Z}$ for each $\sigma \in \Hom_{\mathbb{Q}_p} (K, L)$. 

Let $R'$ and $\Xi R'$ be the $T(\mathcal{O}_L)$-valued crystalline representations $\Gal_{K_f} \to T(\mathcal{O}_L)$ extended from $\rho'$ and $\Xi \rho'$ respectively as above. By the calculation in the proof of \cite[Lemma 2.1]{BIP22}, we have 
\begin{align*}
(\Xi R')^{\ab} (\rec_{K_f} (y)) 
&= \prod_{\sigma \in \Gal(K_f / K)} {R'}^{\ab} (\sigma \rec_{K_f} (y) \sigma^{-1}) \\ 
&= \prod_{\sigma \in \Gal(K_f / K)} {R'}^{\ab} (\rec_{K_f} (\sigma(y))) \\ 
&= {R'}^{\ab}(\rec_{K_f} (\text{Nm}_{K_f / K} (y))) \\ 
&= \psi |_{W_K} (\rec_K (\text{Nm}_{K_f / K} (y)))) \\ 
&= \psi |_{W_{K_f}} (\rec_{K_f} (y))
\end{align*}
for any $y \in K_f^{\ast}$. Here, the first equality follows from the fact that each $\tau^i$ acts trivially on the abelianization ${\rho'}^{\ab}$, the fourth equality follows from the second bullet of the properties of the representation $\rho' \in M_{T, \crys}^0$, and the last equality follows from $\text{Nm}_{K_f / K} \circ \rec_{K_f}^{-1} = \rec_K^{-1} \circ \lambda$ by local class field theory, where $\lambda \colon W_{K_f}^{\ab} \to W_K^{\ab}$ is induced by the inclusion $W_{K_f} \hookrightarrow W_K$. So we have $(\Xi R')^{\ab} |_{W_{K_f}} = \psi |_{W_{K_f}}$, and by continuity $(\Xi R')^{\ab} = \psi |_{\Gal_{K_f}}$. 
%by the proof of \cite[Lemma 2.1]{BIP22}. 
Moreover, it follows that the image of $\Xi \rho'$ by the surjection $\Xi M_{T, \crys}^0 \to M_{T, \tau, k_L}$ is $\Xi \overline{\rho}' = \overline{\rho} |_{I_K}$ by the construction. 

Let $w \in N_G (T)(\mathcal{O}_L)$ be an element which is a lift of the element $\overline{\rho}(\Phi_K)^{-1}$. 
We may assume that the image of $w$ by the composite $N_G (T)(\mathcal{O}_L) \hookrightarrow G(\mathcal{O}_L) \to G^{\ab} (\mathcal{O}_L)$ is $\psi (\Phi_K)^{-1}$ by the assumption of the existence of a right inverse of the map $T \to G^{\ab}$. 

Since $\Xi M_{T, \crys}^0 \subset M_{T, \tau, \crys}^0$, there exists a continuous representation $\rho \colon \Gal_K \to N_G (T)(\mathcal{O}_L) \subset G(\mathcal{O}_L)$ such that $\rho |_{\Gal_{K_f}} = \Xi R'$ and $\rho^{\ab} = \psi$ by Lemma \ref{extendability_NGT} (1) where $w$ is taken as above. 
Then $\rho$ is a crystalline lift of $\overline{\rho}$ satisfying $\rho^{\ab} = \psi$. 
But the Hodge-Tate weights of $\rho$ are not necessarily regular. 

\vspace{3mm} 

\leftline{\textbf{Step 3.\ }}

Finally, we show the existence of a crystalline lift with fixed abelianization which has regular Hodge-Tate weights. We prove it by an argument of Lin \cite[Lemma 12, Theorem 6]{Lin20}. 

Let $i \colon T \hookrightarrow G$ be the embedding and fix embeddings $K_f \subset L \subset \mathbb{C}_p$. 
%fix embeddings $\iota \colon K_f \hookrightarrow L$ and $\sigma_0 \colon L \hookrightarrow \mathbb{C}_p$. 
For constructing such a crystalline lift with regular Hodge-Tate weights, we first show the existence of a crystalline representation $t \colon \Gal_{K_f} \to T(\mathcal{O}_L)$ satisfying the following properties for each $\sigma \in \Hom_{\mathbb{Q}_p} (L, \mathbb{C}_p)$:
\begin{itemize}
\item $\mathcal{H}\mathcal{T}(i(t))^{\sigma}$ is regular if $\sigma |_{K_f}$ coincides with the fixed embedding $K_f \hookrightarrow \mathbb{C}_p$. 
\item $\mathcal{H}\mathcal{T}(i(t))^{\sigma}$ is the trivial cocharacter if $\sigma |_{K_f}$ does not coincide with the fixed embedding $K_f \hookrightarrow \mathbb{C}_p$. 
\item $t^{\ab}$ is trivial. 
\end{itemize}

The existence of such $t$ is reduced to show $\ker(X_{\ast}^{\ab}) \not\subset \bigcup_{w \in W(G, T) \backslash \{ e \}} \ker(w-1)$
%where $X_{\ast}^{\ab} \colon X_{\ast} (T) \to X_{\ast} (G^{\ab})$ is the map induced by the composite $T \hookrightarrow G \twoheadrightarrow G^{\ab}$ and 
%\[
%w-1 \colon X_{\ast} (T) \to X_{\ast} (T), \ \chi \mapsto \ad(w)(\chi) \cdot \chi^{-1}
%\]
%for each $w \in W(G, T)$.  
by considering powers of the fundamental character of $K_f$ for each coordinate of $T(\mathcal{O}_L) \cong (\mathcal{O}_L^{\ast})^{d_0 + d_1}$ (see the proof of \cite[Lemma 12]{Lin20}). 
In fact, let 
\[
x \in \ker(X_{\ast}^{\ab}) \backslash \left( \bigcup_{w \in W(G, T) \backslash \{ e \} } \ker(w-1) \right).
\]
Then the composite 
\[
W_{K_f} \twoheadrightarrow W_{K_f}^{\ab} \xrightarrow[\cong]{\rec_{K_f}^{-1}} K_f^{\ast} \hookrightarrow L^{\ast} \cong \mathbb{G}_m (L) \xrightarrow{x(L)} T(L)
\]
extends to a required crystalline representation $t \colon \Gal_{K_f} \to T(L)$. 
The assertion follows from Lemma \ref{trivdet_regular}. 

For such $t$, let $v_0 \in M_{T, \crys}^0$ be the composite of the following maps 
\[
I_K = I_{K_f} \xrightarrow{t |_{I_{K_f}}} T(\mathcal{O}_L). 
\]
Then we have $v_0^{\ab} = 1$. For a sufficiently large positive integer $M$, the representation $v' \coloneqq v \cdot (\Xi v_0)^{MN}$, where $v \coloneqq \rho |_{I_K}$ and $N$ is the cardinality of $k_L^{\ast}$, satisfies ${v'}^{\ab} = (v \cdot (\Xi v_0)^{MN})^{\ab} = v^{\ab}$ and $\overline{v'} = \overline{v}$. Since $v, \Xi v_0 \in \Xi M_{T, \crys}^0 \subset M_{T, \tau, \crys}^0$, the representation $v'$ extends to a continuous representation $\rho'' \colon \Gal_K \to G(\mathcal{O}_L)$, which is a crystalline lift of $\overline{\rho}$ by $K_f \subset L$ and \cite[Corollary 2]{Lin20}. 
%We have ${\rho''}^{\ab} |_{I_K} = v^{\ab} = \rho^{\ab} |_{I_K}$ by the construction. 
We have ${\rho''}^{\ab} |_{I_K} = \rho^{\ab}|_{I_K}$ by the construction. 
So replacing $L$ with its finite extension, there is an unramified representation $u \colon \Gal_K \to Z^{\circ} (\mathcal{O}_L)$ whose composite with the map $Z^{\circ} (\mathcal{O}_L) \to G^{\ab} (\mathcal{O}_L)$ is $(\rho''^{\ab})^{-1} \cdot \rho^{\ab}$ since $Z^{\circ} \to G^{\ab}$ is an isogeny of algebraic tori. 
Then $\rho_{\reg} \coloneqq u \rho''$ is a crystalline lift of $\overline{\rho}$ which satisfies $\rho_{\reg}^{\ab} = \rho^{\ab} = \psi$ by the construction.  
In addition, the representation $\rho_{\reg}$ has regular Hodge-Tate weights since for each $\sigma \in \Hom_{\mathbb{Q}_p} (L, \mathbb{C}_p)$, there exists an integer $i \in [0, f-1]$ and an element $\sigma' \in \Hom_{K_f} (L, \mathbb{C}_p)$ such that $\sigma = \sigma' \circ \Phi_K^{f-1-i}$ and 
\[
\mathcal{H}\mathcal{T}(\Xi v_0)^{\sigma} = \tau^i \mathcal{H}\mathcal{T}(v_0)^{\sigma'} \tau^{-i}
\]
by the calculation in the proof of \cite[Theorem 6]{Lin20}. 
\end{proof}

\section{Potentially crystalline lifts of semisimple $L$-parameters with fixed abelianizations}

In this section, we also prove the existence of potentially crystalline lifts with a fixed potentially crystalline abelianization of semisimple mod $p$ $L$-parameters for connected quasi-split tame groups. 
%, which splits over a tame Galois extension $F/K$. 

First, we recall the definitions of $L$-groups and $L$-parameters following \cite[\S 2, \S 3]{Lin23b}. 
Let $K/\mathbb{Q}_p$ be a finite extension and $G$ be a connected quasi-split reductive group over $K$. Fix a $K$-pinning $(B, T, \{ X_{\alpha} \})$ of $G$. 
The pinned group $(G, B, T, \{ X_{\alpha} \} )$ has a dual pinned group $(\widehat{G}, \widehat{B}, \widehat{T}, \{ Y_{\alpha} \} )$ defined over $\mathbb{Z}$. 
Let $E \subset \overline{K}$ be a splitting field of $G$, which is a finite Galois extension over $K$. 
The Galois action on the based root datum induces a Galois action $\Gal(E/K) \to \Aut(\widehat{G}, \widehat{B}, \widehat{T}, \{ Y_{\alpha} \} )$. 
Write ${^L}G \coloneqq \widehat{G} \rtimes \Gal_K$ (see \cite{BG10} and \cite[\S 1.1]{Zhu20a} for more details). Write ${^L} \{ \ast \} \cong \Gal_K$ for the $L$-group of the trivial group. 

\begin{definition}[cf.\ {\cite[Definition 3.1.1, Definition 3.1.3]{Lin23b}}]
Let $G$, $H$ be connected reductive groups over $K$ and $\widehat{G}$, $\widehat{H}$ be their Langlands dual groups. 
\begin{enumerate}
\item For a finite ring $A$, a group homomorphism 
\[
\widehat{G}(A) \rtimes \Gal(E/K) \to \widehat{H}(A) \rtimes \Gal(E/K)
\]
is said to be an \emph{$L$-homomorphism (with $A$-coefficients)} if each element $g \times \sigma \in \widehat{G}(A) \rtimes \Gal(E/K)$ is sent to $h \times \sigma \in \widehat{H}(A) \rtimes \Gal(E/K)$. 
\item For a finite ring $A$, a group homomorphism 
\[
\widehat{G}(A) \rtimes W_{E/K} \to \widehat{H}(A) \rtimes W_{E/K}
\]
is said to be an \emph{$L$-homomorphism (with $A$-coefficients)} if each element $g \times \sigma \in \widehat{G}(A) \rtimes \Gal(E/K)$ is sent to $h \times \sigma \in \widehat{H}(A) \rtimes \Gal(E/K)$. 
\item For a finite ring $A$, a group homomorphism 
\[
\widehat{G}(A) \rtimes \Gal_K = {^L}G(A) \to {^L}H(A) = \widehat{H}(A) \rtimes \Gal_K
\]
is said to be an \emph{$L$-homomorphism (with $A$-coefficients)} if each element $g \times \sigma \in \widehat{G}(A) \rtimes \Gal_K$ is sent to $h \times \sigma \in \widehat{H}(A) \rtimes \Gal_K$, and there exists an open subgroup $\Gamma \subset \Gal_K$ such that $1 \times \sigma \in \widehat{G}(A) \rtimes \Gal_K$ is sent to $1 \times \sigma \in \widehat{H}(A) \rtimes \Gal_K$ for every $\sigma \in \Gamma$. 
\item An \emph{$L$-parameter (of $\Gal_K$ (resp.\ $\Gal(E/K)$, $W_{E/K}$) for $G$ with $A$-coefficients)} is an $L$-homomorphism 
\begin{align*}
&\Gal_K \simeq {^L} \{ \ast \} (A) \to {^L}G(A), \\ 
\text{(resp.\ } \Gal(E/K) &\to \widehat{G}(A) \rtimes \Gal(E/K), \ W_{E/K} \to \widehat{G}(A) \rtimes W_{E/K}), 
\end{align*}
defined up to $\widehat{G}(A)$-conjugacy. Note that an $L$-parameter can be considered as an ${^L}G(A)$-valued Galois representation $\Gal_K \to {^L}G(A)$. 
\item For a profinite ring $R$, an \emph{$L$-parameter (for $G$ with $R$-coefficients)} is a compatible system of $L$-parameters $\Gal_K \to {^L}G(A)$, where $A$ is a finite quotient of $R$. 
\item For $A = \overline{\mathbb{F}}_p$ or $\overline{\mathbb{Z}}_p$, an \emph{$L$-parameter (for $G$ with $A$-coefficients)} is the composite of an $L$-parameter $\Gal_K \to {^L}G(A')$ for a finitely generated $\mathbb{Z}_p$-algebra $A' \subset A$ and the inclusion ${^L}G(A') \hookrightarrow {^L}G(A)$. 
\end{enumerate}

\end{definition}

%In \cite[\S 2.1.1]{Lin23b}, for an algebraic group $H$ over a commutative ring $R$ and a morphism of schemes $f \colon \mathbb{G}_m \to H$, the functors $P_H (f)$, $U_H (f)$, $Z_H (f)$ on the $R$-algebras, which are subfunctor of $H$ are constructed. 

%\begin{definition}[{\cite[Definition 2.2.1]{Lin23b}}]
%Let $H$ be a linear algebraic group over a commutative ring $R$. 
%A subgroup of $H$ is said to be a \emph{pseudo-parabolic} (resp.\ a \emph{pseudo-Levi}) subgroup if it is of the form $P_H (f)$ (resp.\ $Z_H (f)$) for some cocharacter $f \colon \mathbb{G}_m \to H^{\circ}$. 
%\end{definition}

%\begin{definition}[{\cite[Definition 2.2.3]{Lin23b}}]
%Let $H$ be a linear algebraic group over an algebraically closed field. 
%\begin{enumerate}
%\item A pseudo-parabolic $P$ of $H$ is said to be a \emph{proper} pseudo-parabolic if $P \cap H^{\circ} \neq H^{\circ}$. 
%\item An abstract subgroup $\Gamma \subset H$ is said to be \emph{pseudo-completely reducible} in $H$ if whenever $\Gamma$ is contained in a proper pseudo-parabolic, it is also contained in a corresponding pseudo-Levi.  
%\end{enumerate}
%\end{definition}

%\begin{definition}[{\cite[Definition 3.2.5]{Lin23b}}]
%An $L$-parameter $\overline{\rho} \colon \Gal_K \to {^L}G(\overline{\mathbb{F}}_p)$ is said to be \emph{semisimple} if the image of $\overline{\rho}$ is pseudo-completely irreducible. 
%\end{definition}

\begin{definition}[cf.\ {\cite[Definition 5.1.1]{Lin23b}}]
Let $\Lambda \subset \overline{\mathbb{Z}}_p$ be a discrete valuation ring. 
For a connected quasi-split reductive group $G$ over $K$ which splits over a finite Galois extension $E/K$, an $L$-parameter $\rho \colon \Gal_K \to \widehat{G} (\Lambda) \rtimes \Gal(E/K)$ is said to be \emph{crystalline} (resp.\ \emph{potentially crystalline}) if for some closed embeddings of algebraic groups $\widehat{G} \rtimes \Gal_{E/K} \hookrightarrow \GL_d$ for some $d \geq 1$, the composite $\Gal_K \to \widehat{G} (\Lambda) \rtimes \Gal(E/K) \to \GL_d (\Lambda)$ is crystalline (resp.\ \emph{potentially crystalline}). 
\end{definition}
%alg. grp.の埋め込みは\Lambaのfractional field上で考えれば十分そうだが、Z上でもよかったか?

An $L$-parameter $\overline{\rho} \colon \Gal_K \to {^L}G(\overline{\mathbb{F}}_p)$ is said to be \emph{semisimple} if $\overline{\rho}$ is semisimple in the sense of \cite[Definition 3.2.5]{Lin23b}. 

The following Langlands-Shelstad factorizations of $L$-parameters are shown in \cite{Lin23b}. 

\begin{theorem}[{\cite[Theorem 3.4.1]{Lin23b}}]
\label{LS_factorization}
For a semisimple $L$-parameter $\overline{\rho} \colon \Gal_K \to {^L}G(\overline{\mathbb{F}}_p)$, there exists a tame $K$-torus $S$ of $G$ and a canonical $L$-homomorphism ${^L}j \colon {^L}S_{\overline{\mathbb{F}}_p} \to {^L}G_{\overline{\mathbb{F}}_p}$ such that $\overline{\rho}$ factors through ${^L}j$. 
\end{theorem}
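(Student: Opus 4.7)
The plan is to extract the tame torus $S$ and the $L$-embedding ${^L}j$ from the structural properties of the image of $\overline{\rho}$, in close parallel with the way Theorem \ref{reducible_quasi_semisimple} produces a quasi-semisimple model of a $G$-completely reducible mod $p$ representation.

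First, I would eliminate the wild inertia. Since $\widehat{G}(\overline{\mathbb{F}}_p)$ is locally finite and every finite $p$-power-order element of a reductive group in characteristic $p$ is unipotent, the continuous image of the pro-$p$ wild inertia $P_K$ under $\overline{\rho}$ is a finite $p$-group of unipotent elements, hence is contained in the unipotent radical of some parabolic by Borel--Tits. The semisimplicity (i.e.\ $G$-complete reducibility in the sense of \cite[Definition 3.2.5]{Lin23b}) then forces this image to lie in a Levi, so $\overline{\rho}|_{P_K}$ is trivial. Hence $\overline{\rho}$ factors through the tame quotient of $\Gal_K$, and $\overline{\rho}(I_K)$ is a finite cyclic group of prime-to-$p$ order consisting of commuting semisimple elements of $\widehat{G}(\overline{\mathbb{F}}_p)$.

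Second, I would locate a maximal torus $\widehat{T}' \subset \widehat{G}_{\overline{\mathbb{F}}_p}$ which is simultaneously stabilized by the full image. Because $\overline{\rho}(I_K)$ is a commuting family of semisimple elements, it is contained in some maximal torus; the $G$-complete reducibility of $\overline{\rho}$, combined with the argument of Theorem \ref{reducible_quasi_semisimple} adapted to the $L$-group context, allows one to choose $\widehat{T}'$ so that $\overline{\rho}(I_K) \subset \widehat{T}'(\overline{\mathbb{F}}_p)$ and $\overline{\rho}(\Gal_K) \subset (N_{\widehat{G}_{\overline{\mathbb{F}}_p}}(\widehat{T}')(\overline{\mathbb{F}}_p)) \rtimes \Gal_K$ inside ${^L}G(\overline{\mathbb{F}}_p)$. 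Then the conjugation action on $\widehat{T}'$ via $\overline{\rho}$, combined with the Galois action present in the semidirect product ${^L}G$, yields a new action of $\Gal_K$ on the cocharacter lattice of $\widehat{T}'$ that factors through the tame quotient by step one. The lattice together with this tame Galois action is the cocharacter lattice of a tame $K$-torus $S$, and one verifies that this $S$ arises as a (not necessarily maximal) torus of $G$ by checking that the action respects the root-datum structure inherited from $(G,B,T)$.

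Third, the canonical $L$-homomorphism ${^L}j \colon {^L}S_{\overline{\mathbb{F}}_p} \to {^L}G_{\overline{\mathbb{F}}_p}$ is constructed by using the inclusion $\widehat{T}' \hookrightarrow \widehat{G}_{\overline{\mathbb{F}}_p}$ on the torus part and using the image $\overline{\rho}(\sigma)$ (corrected by an appropriate cocycle) to describe the semidirect product section; the factorization $\overline{\rho} = {^L}j \circ \overline{\rho}^S$ is then built into the construction. The main obstacle is the canonicity: the choice of $\widehat{T}'$ is only well-defined up to $\widehat{G}(\overline{\mathbb{F}}_p)$-conjugacy, and the cocycle describing the semidirect product section must be chosen coherently to obtain a genuine $L$-homomorphism (not merely a conjugation class of maps) independent of auxiliary choices. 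Making this precise is the classical Langlands--Shelstad problem for embedding $L$-groups of tori, adapted here to mod $p$ coefficients; the careful execution is carried out in \cite[Theorem 3.4.1]{Lin23b}, to which the proof ultimately reduces.
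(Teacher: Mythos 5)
The paper does not actually prove this statement: it is imported directly from \cite[Theorem 3.4.1]{Lin23b}, and your proposal, after sketching the expected strategy (killing wild inertia via Borel--Tits plus complete reducibility, placing the image in the normalizer of a maximal torus of $\widehat{G}$, and building the torus $S$ and the embedding ${^L}j$ from the resulting Galois action on the cocharacter lattice), likewise reduces to that same citation, so the two ``proofs'' coincide in substance. Just be aware that in Lin's result $S$ is a tame \emph{maximal} torus of $G$ (your hedge ``not necessarily maximal'' is unnecessary), that your wild-inertia step implicitly uses tameness of $G$ so that the $\widehat{G}$-component of $\overline{\rho}|_{P_K}$ is an honest homomorphism, and that the genuinely hard point---the canonical, conjugation-independent choice of ${^L}j$ (the Langlands--Shelstad/$\chi$-data issue you correctly flag)---is exactly the content supplied by the cited theorem rather than by your sketch.
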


For a semisimple $L$-parameter $\overline{\rho} \colon \Gal_K \to {^L}G(\overline{\mathbb{F}}_p)$, we fix a tame $K$-torus $S \subset G$ satisfying the condition of Theorem \ref{LS_factorization} in the assertion of Theorem \ref{qst_dRlift_fixeddet}. 

For $A = \overline{\mathbb{Z}}_p^{\ast}$ or $\overline{\mathbb{F}}_p^{\ast}$, note that the module $H_{\cont}^1 (W_{E/K}, X^{\ast} (T_E) \otimes_{\mathbb{Z}} A)$ (resp.\ $H_{\cont}^1 (\Gal_K, X^{\ast} (T_E) \otimes_{\mathbb{Z}} A)$) is naturally bijective to the set of $\widehat{G}(A)$-conjugacy classes of $L$-parameters $W_{E/K} \to \widehat{G}(A) \rtimes W_{E/K}$ (resp.\ $\Gal_K \to {^L}G(A) \rtimes \Gal_K$). 
We also use the $p$-adic local Langlands correspondence for tori with divisible coefficients. 

\begin{theorem}[{\cite{Bir20}}]
%[cf.\ {\cite{Bir20}, \cite[Theorem 4.3.1]{Lin23b}}]
\label{padicLLC_tori_divcoef}
For a $K$-torus $T$ with splitting field $E$, which is a finite Galois extension over $K$, and a divisible abelian topological group $D$, there exists a natural isomorphism 
\[
H_{\cont}^1 (W_{E/K}, X^{\ast} (T_E) \otimes_{\mathbb{Z}} D) \xrightarrow{\sim} \Hom_{\cont} (T(K), D). 
\]
Write $T_E \coloneqq T \times_K E$. Then the following natural maps 
\begin{align*}
H_{\cont}^1 (\Gal_K, X^{\ast} (T_E) \otimes_{\mathbb{Z}} \overline{\mathbb{F}}_p^{\ast}) &\to H_{\cont}^1 (W_{E/K}, X^{\ast} (T_E) \otimes_{\mathbb{Z}} \overline{\mathbb{F}}_p^{\ast}), \\ 
H_{\cont}^1 (\Gal_K, X^{\ast} (T_E) \otimes_{\mathbb{Z}} \overline{\mathbb{Z}}_p^{\ast}) &\to H_{\cont}^1 (W_{E/K}, X^{\ast} (T_E) \otimes_{\mathbb{Z}} \overline{\mathbb{Z}}_p^{\ast}), 
\end{align*}
are also isomorphisms. 
\end{theorem}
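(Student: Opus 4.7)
The first isomorphism is the content of \cite{Bir20} and extends Langlands' classical local Langlands correspondence for tori to divisible coefficient groups. The plan is to use the short exact sequence
\[
1 \to E^{\ast} \to W_{E/K} \to \Gal(E/K) \to 1
\]
provided by local class field theory via $W_E^{\ab} \cong E^{\ast}$, and to compute $H^1_{\cont}(W_{E/K}, M)$ for $M \coloneqq X^{\ast}(T_E) \otimes_{\mathbb{Z}} D$ by Hochschild--Serre, noting that $E^{\ast}$ acts trivially on $M$. On the other side, since $T_E$ splits over $E$, $T(E) \cong X_{\ast}(T_E) \otimes_{\mathbb{Z}} E^{\ast}$ as $\Gal(E/K)$-modules, and $\Hom_{\cont}(T(K), D) = \Hom_{\cont}(T(E)^{\Gal(E/K)}, D)$ is analyzed using the exactness of $\Hom_{\mathbb{Z}}(-, D)$ (from divisibility of $D$) and Tate--Nakayama duality for the finite group $\Gal(E/K)$.

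For the second part, observe that the $\Gal_K$-action on $M$ factors through $\Gal(E/K)$, so $W_E$, hence the closed subgroup $\overline{W_E^{\der}}$, acts trivially on $M$. Given a continuous cocycle $\phi \colon \Gal_K \to M$, restrict along the natural continuous inclusion $W_K \hookrightarrow \Gal_K$: the restriction $\phi|_{W_E}$ is a continuous homomorphism, so it kills $\overline{W_E^{\der}}$, and the cocycle identity
\[
\phi(wd) = \phi(w) + w \cdot \phi(d) = \phi(w) \quad \text{for } d \in \overline{W_E^{\der}},\ w \in W_K
\]
shows that $\phi|_{W_K}$ descends to a continuous cocycle on $W_{E/K} = W_K / \overline{W_E^{\der}}$. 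This defines the natural map $H^1_{\cont}(\Gal_K, M) \to H^1_{\cont}(W_{E/K}, M)$. The same cocycle argument shows that inflation gives an isomorphism $H^1_{\cont}(W_{E/K}, M) \cong H^1_{\cont}(W_K, M)$, so the claim reduces to proving that the restriction $H^1_{\cont}(\Gal_K, M) \to H^1_{\cont}(W_K, M)$ is an isomorphism.

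Injectivity is immediate from the density of $W_K$ in $\Gal_K$ and the continuity of cocycles: a continuous cocycle cohomologous to zero on a dense subgroup is cohomologous to zero. For surjectivity, one compares the inflation-restriction sequences attached to $\Gal_E \lhd \Gal_K$ and $W_E \lhd W_K$, both with quotient $\Gal(E/K)$; the comparison reduces to showing that the natural map
\[
\Hom_{\cont}(\Gal_E^{\ab}, M)^{\Gal(E/K)} \to \Hom_{\cont}(E^{\ast}, M)^{\Gal(E/K)}
\]
induced by the reciprocity embedding $E^{\ast} \hookrightarrow \Gal_E^{\ab}$ is an isomorphism. This is the main obstacle: one must exploit the specific structure of the coefficient groups $\overline{\mathbb{F}}_p^{\ast}$ and $\overline{\mathbb{Z}}_p^{\ast}$, together with the decomposition $E^{\ast} \cong \mathcal{O}_E^{\ast} \times \varpi_E^{\mathbb{Z}}$, to extend continuous characters of $E^{\ast}$ across the profinite completion $\widehat{E^{\ast}} \cong \Gal_E^{\ab}$---an extension which is controlled by the profinite/divisible structure of $D$ and which is precisely where the hypothesis on the coefficients becomes essential.
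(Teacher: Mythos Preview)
Your approach to the second part is correct but genuinely different from the paper's. The paper does not manipulate cocycles directly; instead it first proves the isomorphism for \emph{finite} discrete coefficients $A$, observing that an $L$-homomorphism $W_{E/K} \to \widehat{T}(A)\rtimes W_{E/K}$ is the same as one from $W_K$ (since $W_E$ acts trivially on $\widehat{T}$), and then invokes \cite[Lemma~3.1.2]{Lin23b} to pass from $W_K$ to $\Gal_K$---the point being that with finite coefficients the image is finite, so the parameter factors through a finite quotient of $W_K$ and hence of $\Gal_K$. The cases $\overline{\mathbb{F}}_p^{\ast}$ and $\overline{\mathbb{Z}}_p^{\ast}$ are then obtained by (co)limits over finite quotients. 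Your route via inflation--restriction and the reduction to extending continuous characters of $E^{\ast}$ across $\widehat{E^{\ast}}\cong\Gal_E^{\ab}$ is more self-contained and isolates precisely where the coefficient hypothesis enters. One small correction to your last sentence: it is not \emph{divisibility} that governs this extension (divisibility is what makes Birkbeck's isomorphism work in the first part, via exactness of $\Hom(-,D)$); what is needed here is that every element of $M$ generates an extendable homomorphism $\mathbb{Z}\to M$ in the sense of Lemma~\ref{G_extendability}. For $\overline{\mathbb{F}}_p^{\ast}$ this holds because the group is torsion; for $\overline{\mathbb{Z}}_p^{\ast}$ because any element lies in some profinite $\mathcal{O}_L^{\ast}$, so a suitable power lies in $1+\mathfrak{m}_L$ and $c^{p^k}\to 1$. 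With $E^{\ast}\cong\mathcal{O}_E^{\ast}\times\varpi_E^{\mathbb{Z}}$, the compact factor already lands in a profinite target and the $\mathbb{Z}$-factor extends to $\widehat{\mathbb{Z}}$ by this observation, which completes your argument; the paper's limit argument effectively hides this same step inside the cited lemma.
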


\begin{proof}
For the first isomorphism, see \cite{Bir20}, \cite[Theorem 4.3.1]{Lin23b}. 
%Let $k'/\mathbb{F}_p$ be a finite extension 
Let $A$ be a finite discrete ring.  
%and $\overline{\rho} \colon W_{E/K} \to \widehat{T} (A) \rtimes W_{E/K}$ be an $L$-homomorphism. 
Then the set of $L$-homomorphisms $\rho \colon W_{E/K} \to \widehat{T} (A) \rtimes W_{E/K}$ is naturally bijective to the set of $L$-homomorphisms $\rho \colon W_K \to \widehat{T} (A) \rtimes W_K$ since $W_E$ acts trivially on $\widehat{T}$. 
%let $E'/E$ be a finite extension such that $\overline{W_E^{\der}} \subset W_{E'} \subset W_E$ and each $\sigma \in W_{E'}$ is sent to the element $1 \times \sigma \in \widehat{T} (\overline{\mathbb{F}}_p^{\ast}) \rtimes W_{E/K}$. 
%Note that there is the natural surjection $W_{E/K} \twoheadrightarrow W_K / W_{E'} \cong \Gal(E'/K)$. 
%For $A = \overline{\mathbb{F}}_p^{\ast}$ or $\overline{\mathbb{Z}}_p^{\ast}$, 
%hen the $L$-homomorphism $\overline{\rho} \colon W_{E/K} \to \widehat{T} (A) \rtimes W_{E/K}$ naturally induce the map $\Gal(E'/K) \to \widehat{T} (A) \rtimes \Gal(E'/K)$ 
%by the surjection $W_{E/K} \twoheadrightarrow \Gal(E/K)$ 
%by the construction. 
%since $\Gal_E$ acts trivially on $\widehat{G}$. 

By \cite[Lemma 3.1.2 (2), (4)]{Lin23b}, an $L$-homomorphism $\rho \colon W_K \to \widehat{T}(A) \rtimes W_K$ naturally corresponds to an $L$-parameter $\rho \colon \Gal_K \to {^L}G(A)$. So we have an isomorphism 
\begin{equation}
\label{padicLLC_coef_A}
H_{\cont}^1 (\Gal_K, X^{\ast} (T_E) \otimes_{\mathbb{Z}} A) \xrightarrow{\sim} H_{\cont}^1 (W_{E/K}, X^{\ast} (T_E) \otimes_{\mathbb{Z}} A)
\end{equation}
It follows that the last maps are isomorphisms by considering the (co)limits of the isomorphisms (\ref{padicLLC_coef_A}) for finite discrete coefficients $A$. 
%It follows that the second map of the theorem it suffices to show the following map 
%\begin{equation}
%\label{A_coef_L_param_map}
%H_{\cont}^1 (\Gal_K, X^{\ast} (T) \otimes_{\mathbb{Z}} A) \to H_{\cont}^1 (W_{E/K}, X^{\ast} (T) \otimes_{\mathbb{Z}} A)
%\end{equation}
%is an isomorphism. This follows because each $L$-parameter $\Gal(E/K) \to \widehat{T}(A) \rtimes \Gal(E/K)$ is uniquely lifted to an $L$-parameter $\Gal_K \to {^L}T(A)$ by \cite[Lemma 3.1.2 (4)]{Lin23b}. 
%It follows that the last map of the theorem is an isomorphism since the map \ref{A_coef_L_param_map} is an isomorphism and each $L$-parameter $\rho \colon $
\end{proof}

For a $K$-torus $S$ and an $L$-parameter $\overline{\rho} \colon \Gal_K \to {^L}S (\overline{\mathbb{F}}_p)$, let $[\overline{\rho}] \colon \Gal_K \to {^L}S (\overline{\mathbb{Z}}_p)$ be the $L$-parameter induced by the isomorphisms in Theorem \ref{padicLLC_tori_divcoef} for $D = \overline{\mathbb{F}}_p^{\ast}$ and the Teichm\"uller character $\overline{\mathbb{F}}_p^{\ast} \hookrightarrow \overline{\mathbb{Z}}_p^{\ast} (\subset W(\overline{\mathbb{F}}_p)^{\ast})$ from $\overline{\rho}$. 

\begin{theorem}
\label{qst_dRlift_fixeddet}
Let $G$ be a connected quasi-split tame group over $K$ and $\overline{\rho} \colon \Gal_K \to {^L}G (\overline{\mathbb{F}}_p)$ be a semisimple mod $p$ $L$-parameter. 
%Let $T$ be a maximal torus of $G$. 
%Assume that the composite $T \hookrightarrow G \twoheadrightarrow G^{\ab}$ has a right inverse. 
Fix a tame $K$-torus $S$ of $G$ such that $\overline{\rho}$ factors through ${^L}S (\overline{\mathbb{F}}_p)$.
%, which exists by Theorem \ref{LS_factorization}. 
Let $i \colon G^{\ab} \to S$ be a right inverse of the composite $S \hookrightarrow G \twoheadrightarrow G^{\ab}$. 
%Assume that the composite $S \hookrightarrow G \twoheadrightarrow G^{\ab}$ has a right inverse, which is written as $i \colon G^{\ab} \to S$. 

For $A = \overline{\mathbb{F}}_p^{\ast}$ or $\overline{\mathbb{Z}}_p^{\ast}$ and an $L$-parameter $\tau \colon \Gal_K \to {^L}S(A)$, write $\tau^{\ab}$ for the composite of $\tau$ and the map ${^L}S (A) \to {^L}G^{\ab}(A)$ induced by the composite $G^{\ab} \xrightarrow{i} S \hookrightarrow G$ and we set $\overline{\psi} \coloneqq \overline{\rho}^{\ab}$. 

Let $\psi \colon \Gal_K \to {^L}G^{\ab}(\overline{\mathbb{Z}}_p)$ be a potentially crystalline lift of $\overline{\psi}$. 
Then $\overline{\rho}$ admits a potentially crystalline lift $\rho \colon \Gal_K \to {^L}G(\overline{\mathbb{Z}}_p)$ which has regular Hodge-Tate weights and satisfies $\rho^{\ab} = \psi$. 
\end{theorem}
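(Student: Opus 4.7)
The plan is to reduce the problem to the tame maximal torus $S$ and invoke the $p$-adic local Langlands correspondence for tori. By Theorem \ref{LS_factorization}, $\overline{\rho}$ factors as ${^L}j \circ \overline{\rho}_S$ through ${^L}S(\overline{\mathbb{F}}_p)$, so it suffices to produce a potentially crystalline $L$-parameter $\rho_S \colon \Gal_K \to {^L}S(\overline{\mathbb{Z}}_p)$ lifting $\overline{\rho}_S$ with $(\rho_S)^{\ab} = \psi$; then $\rho \coloneqq {^L}j \circ \rho_S$ is our candidate lift, since ${^L}j$ is a closed embedding of algebraic groups and hence preserves potential crystallinity. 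Using the right inverse $i \colon G^{\ab} \hookrightarrow S$, I dualize to obtain a Galois-equivariant splitting $\widehat{S} \cong \widehat{G^{\ab}} \times \widehat{S'}$, where $\widehat{S'}$ is the kernel of the surjection $\widehat{S} \twoheadrightarrow \widehat{G^{\ab}}$ dual to $i$. This yields a direct product decomposition ${^L}S \cong {^L}G^{\ab} \times_{\Gal_K} {^L}S'$, and via Theorem \ref{padicLLC_tori_divcoef}, $L$-parameters into ${^L}S$ decompose into pairs of characters of $G^{\ab}(K)$ and $S'(K)$.

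Next I build $\rho_S = (\psi, \rho_{S'})$ componentwise. The abelian component is the given $\psi$. For the complementary component, I take the Teichm\"uller lift $[\overline{\rho}_{S'}] \colon \Gal_K \to {^L}S'(\overline{\mathbb{Z}}_p)$ induced by $\overline{\mathbb{F}}_p^* \hookrightarrow \overline{\mathbb{Z}}_p^*$ via Theorem \ref{padicLLC_tori_divcoef}. Under local Langlands for tori this corresponds to a character of $S'(K)$ that is trivial on an open subgroup (since $\overline{\rho}_{S'}$ has finite image), hence locally algebraic with vanishing algebraic part; the associated $L$-parameter is therefore potentially unramified, and in particular potentially crystalline with trivial Hodge-Tate cocharacter in the $S'$-direction. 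By construction $\rho_S$ is a potentially crystalline lift of $\overline{\rho}_S$ satisfying $(\rho_S)^{\ab} = \psi$ by the definition of abelianization given in the theorem statement.

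To upgrade $\rho$ to regular Hodge-Tate weights without altering its abelianization, I twist on the $S'$-factor only, mimicking Step 3 of the proof of Theorem \ref{qss_cryslift_fixeddet}. After passing to a finite extension $K'/K$ over which $S$ splits, I seek a cocharacter $\chi \in \ker(X_\ast(S) \to X_\ast(G^{\ab}))$ not killed by any root of $G_{\overline{K}}$; the quasi-split analog of Lemma \ref{trivdet_regular} supplies such a $\chi$ by the same reduction via the isogeny $Z^\circ \to G^{\ab}$ to the adjoint group $G^{\ad}$, where no nontrivial Weyl group element fixes a nonzero cocharacter. Powers of the Lubin--Tate character of $K'$ combined with $\chi$ then yield a crystalline $L$-parameter $\Gal_{K'} \to {^L}S'(\overline{\mathbb{Z}}_p)$ which descends to a potentially crystalline $L$-parameter of $\Gal_K$ (the abelianization being untouched since $\chi$ lies in the kernel to $X_\ast(G^{\ab})$); twisting $\rho_S$ by a sufficient integral power of this parameter gives the desired lift.

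The principal obstacle will be verifying that the Galois-equivariant splitting ${^L}S \cong {^L}G^{\ab} \times_{\Gal_K} {^L}S'$ interacts compatibly with Theorem \ref{padicLLC_tori_divcoef} and with potential crystallinity, so that the componentwise construction genuinely produces a single potentially crystalline $L$-parameter rather than only a continuous cocycle. Tracing through the $p$-adic local Langlands correspondence over the splitting field $E$ of $S$, and checking that local algebraicity descends under the Galois action on $X^\ast(S_E)$, is the technically delicate step. A secondary point requiring care is formulating a clean analog of Lemma \ref{extendability_NGT}(1) for ${^L}S$-valued parameters, so that crystalline twists on $\Gal_{K'}$ extend to potentially crystalline $L$-parameters of $\Gal_K$ without disturbing either the underlying mod $p$ reduction or the abelianization.
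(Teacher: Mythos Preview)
Your proposal is correct and follows essentially the same route as the paper: split $S \cong G^{\ab} \times S'$ via the right inverse $i$, set $\rho' = (\psi, [\overline{\rho}_{S'}])$ using the Teichm\"uller lift on the $S'$-factor, and then twist by a potentially crystalline character built from a regular cocharacter in $\ker(X_{\ast}(S) \to X_{\ast}(G^{\ab}))$ (invoking Step~3 of Theorem~\ref{qss_cryslift_fixeddet}). The paper handles your flagged ``principal obstacle'' by working on the automorphic side---restricting $\widetilde{\chi}_x \colon S(E) \to \overline{\mathbb{Z}}_p^{\ast}$ to $S(K)$ and invoking \cite[Lemma~5.2.6]{Lin23b} for regularity---rather than descending on the Galois side, and it corrects the mod~$p$ reduction after twisting via the extra factor $[\overline{\rho}'']^{-M}$, a point you should make explicit when you say ``sufficient integral power.''
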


\begin{proof}
%By Theorem \ref{LS_factorization}, $\overline{\rho}$ factors through ${^L}S(\overline{\mathbb{F}}_p)$ for some $K$-torus $S$ of $G$. 
Let $E$ be a finite Galois extension of $K$ such that the image of $\overline{\rho}$ is contained in ${^L}G(E)$, $S$ splits over $E$, $\psi|_{\Gal_E}$ is crystalline, the character $\Gal_K \to k^{\ast}$ associated with the mod $\varpi_K$ fundamental character is trivial on $\Gal_E$. 
We fix an embedding $E \hookrightarrow \overline{K}$ and a uniformizer $\varpi_E \in E$. 

For a $K$-torus $T$ which splits over $E$, let $\text{Nm}_{E/K} \colon T(E) \to T(K)$ be the norm map induced by $\text{Nm}_{E/K} \colon \text{Res}_{E/K} T_E \to T$, where $T_E \coloneqq S \times_K E$.
By Theorem \ref{padicLLC_tori_divcoef}, we have the following commutative diagram 
\[
\xymatrix{
H_K^1 (G^{\ab}) \ar[r]^-{\cong} \ar[d]^{\text{-}|_{\Gal_E}} & 
\Hom_{\cont} (G^{\ab} (K), \overline{\mathbb{Z}}_p^{\ast}) \ar[r] \ar[d]^{\text{-} \circ \text{Nm}_{E/K}} & 
\Hom_{\cont} (S(K), \overline{\mathbb{Z}}_p^{\ast}) \ar[d]^{\text{-} \circ \text{Nm}_{E/K}} & 
H_K^1 (S) \ar[d]^{\text{-}|_{\Gal_E}} \ar[l]_-{\cong} \\ 
H_E^1 (G^{\ab}) \ar[r]^-{\cong} & 
\Hom_{\cont} (G^{\ab} (E), \overline{\mathbb{Z}}_p^{\ast}) \ar[r] & 
\Hom_{\cont} (S(E), \overline{\mathbb{Z}}_p^{\ast}) & 
H_E^1 (S) \ar[l]_-{\cong}
}
\]
where $H_K^1 (-) \coloneqq H_{\cont}^1 (\Gal_K, X^{\ast} (- \times_K \overline{K}) \otimes_{\mathbb{Z}} \overline{\mathbb{Z}}_p^{\ast})$.  
%Since the Langlands dual $\widehat{(-)}$ is functorial and exact on the category of $K$-tori, 

By the existence of the right inverse of the map $S \to G^{\ab}$, we have $S \cong G^{\ab} \times S'$ for some $K$-torus $S'$. 
%Taking a tensor product of powers of the $\Sigma$-conjugates of the Lubin-Tate characters, we have a crystalline lift $\rho'_E \in H_E^1 (S)$ such that ${\rho'}_E^{\ab} = \psi |_{\Gal_E}$. we may assume that the $H_E^1 (S')$-component of $\rho'_E$ is in the image of the map induced by $\text{Nm} |_{E/K}$ and let $\rho' \in H_K^1 (S)$ be an inverse image of $\rho'_E$. 
Let $\overline{\rho}_{S'} \colon \Gal_K \to {^L}S' (\overline{\mathbb{F}}_p)$ be the ${^L}S' (\overline{\mathbb{F}}_p)$-component of $\overline{\rho}$.
Then $[\overline{\rho}_{S'}] \colon \Gal_K \to {^L}S' (\overline{\mathbb{Z}}_p)$ is a potentially crystalline $L$-parameter with its reduction $\overline{\rho}_{S'}$ since its image is finite. 
%Take $\rho'_1 \in H_K^1 (S')$ to be crystalline with its reduction $\overline{\rho}_{S'}$. 
%Taking a potentially crystalline $\rho'_{S'} = \rho'_1 \rho'_2 \in H_K^1 (S')$ where $\rho'_1$ is a crystalline with its reduction $\overline{\rho}'_1$ and $\rho'_2 \coloneqq \overline{\rho}_{S'}[\overline{\rho}'_1]^{-1}$ where $[-] \colon \overline{\mathbb{F}}_p^{\ast} \to W(\overline{\mathbb{F}}_p^{\ast}) \hookrightarrow \overline{\mathbb{Z}}_p$ is the Teichm\"uller character and 
Let 
\[
\rho' \coloneqq (\psi, [\overline{\rho}_{S'}]) \in H_K^1 (G^{\ab}) \oplus H_K^1 (S') \cong H_K^1 (S). 
\]
It follows that $\rho'$ is a potentially crystalline lift of $\overline{\rho}$ and we have ${\rho'}^{\ab} = \psi$ by the construction. 

Note that $\text{Gal} (E/K)$ acts on $X_{\ast} (\widehat{S}_E) = X^{\ast} (S_E)$. 
For a character $x \in X^{\ast} (S_E)$, let $\widetilde{\chi}_x \colon S(E) \to \overline{\mathbb{Z}}_p^{\ast}$ be the composite of a character $x(E) \colon S(E) \to E^{\ast}$, the fundamental character $E^{\ast} \to \mathcal{O}_E^{\ast}$, and a fixed inclusion $\mathcal{O}_E^{\ast} \hookrightarrow \overline{\mathbb{Z}}_p^{\ast}$. 

%It suffices to show the existence of a cocharacter $x \in X_{\ast} (\widehat{S}_E)$ such that for any $\sigma \in \text{Gal} (E/K)$, the cocharacter $\sigma \cdot x \in X_{\ast} (\widehat{S}_E)$ has regular Hodge-Tate weights and the trivial abelianization since $\rho' {\rho''}^M$, where $\rho'' \in H_K^1 (S)$ is the corresponding potentially crystalline representation to $\chi_x \coloneqq \widetilde{\chi}_x |_{S(K)}$, has a regular Hodge-Tate weights for some integer $M$ by \cite[Lemma 5.2.6]{Lin23b}, since the $L$-parameter $\rho' \chi_x [\chi_x]^{-1}$ for such $x \in X^{\ast} (S_E)$ is what we want. 

By Step 3 of the proof of Theorem \ref{qss_cryslift_fixeddet}, there exists a cocharacter $x \in X_{\ast} (\widehat{S}_E)$ which is regular, i.e., not killed by any roots of $\widehat{S}_E$, and the trivial abelianization. 

It follows that for any $\sigma \in \text{Gal} (E/K)$, the cocharacter $\sigma \cdot x \in X_{\ast} (\widehat{S}_E)$ is also regular, and the $L$-parameter corresponding to $\chi_x \coloneqq \widetilde{\chi}_x |_{S(K)}$ has regular Hodge-Tate weights by \cite[Lemma 5.2.6]{Lin23b}. 

Then $\rho' {\rho''}^M$, where $\rho'' \in H_K^1 (S)$ is the potentially crystalline representation corresponding to $\chi_x$, has regular Hodge-Tate weights for a sufficiently large integer $M$. 
%For such $x \in X^{\ast} (S_E)$, let $\tau \colon \Gal_K \to {^L}S(\overline{\mathbb{Z}}_p)$ be the $L$-parameter associated with the character $\chi_x [\chi_x]^{-1} \colon S(K) \to \overline{\mathbb{Z}}_p^{\ast}$. 
The $L$-parameter $\rho' {\rho''}^M [\overline{\rho}'']^{-M}$ is what we want.
%Since $\text{Nm}_{E/K}$ is the $[E : K]$-th power map on the $K$-rational points, the theorem follows by taking a crystalline lift of $\overline{\rho}_{\mathcal{E}}$ such that its Hodge-Tate weights are regular and its determinant is $\psi$ and its projection to $\Hom_{\cont} (S'(E), \overline{\mathbb{Z}}_p)$ is the $[E : K]$-th power of a crystalline representation, as in Theorem \ref{qss_cryslift_fixeddet}. 
\end{proof}

%ellipticyのdef
%thm 3.10をfixed det.付きかつ一般のpの場合に書き直し

\end{document}